\tikzset{commutative diagrams/.cd}
\def\f{\mathfrak{f}}
\def\cal{\mathcal}
\def\E{{\cal E}}
\def\p{\mathfrak{p}}
\newcommand{\FF}{\mathbb{F}}
\newcommand{\Z}{\mathbb{Z}} 
\newcommand{\Q}{\mathbb{Q}} 
\newcommand{\PPP}{\mathbb{P}}
\def\Tr{\mathop{\rm Tr}}
\renewcommand{\to}{\rightarrow}
\def\X{\mathcal{X}}
\newtheorem{Theorem}{Theorem}[section]
\newtheorem{Lemma}[Theorem]{Lemma} 
\newtheorem{Conjecture}[Theorem]{Conjecture} 
\newtheorem{Proposition}[Theorem]{Proposition}
\theoremstyle{definition}
\newtheorem{Definition}[Theorem]{Definition}  
\newtheorem{Remark}[Theorem]{Remark} 
\newtheorem{Example}[Theorem]{Example}
\DeclareMathOperator{\Frob}{Frob}
\DeclareMathOperator{\NS}{NS}
\DeclareMathOperator{\Gal}{Gal}
\DeclareMathOperator{\rank}{rank}
\DeclareMathOperator{\End}{End}
\DeclareMathOperator{\Hom}{Hom}
\DeclareMathOperator{\PGL}{PGL}
\DeclareMathOperator{\Fr}{Fr}
\DeclareMathOperator{\USp}{USp}
\DeclareMathOperator{\Sp}{Sp}
\DeclareMathOperator{\U}{U}
\DeclareMathOperator{\Conj}{Conj}
\DeclareMathOperator{\ST}{ST}
\DeclareMathOperator{\GL}{GL}
\DeclareMathOperator{\Mor}{Mor}
\DeclareMathOperator{\Spec}{Spec}
\newtheorem{ind}[]{{\rm\it Indice}}
\begin{document}

\title[The Sato-Tate conjecture and Nagao's conjecture]{The Sato-Tate conjecture and Nagao's conjecture}

\author[S. Kim]{Seoyoung Kim}
\address{Mathematics Department\\Brown University\\
Box 1917, 151 Thayer Street, Providence, RI 02912 USA}
\email{Seoyoung\_Kim@math.brown.edu}

\date{\today}
				
\begin{abstract}
Nagao's conjecture relates the rank of an elliptic surface to a limit formula arising from a weighted average of fibral Frobenius traces, and it is further generalized for smooth irreducible projective surfaces by M. Hindry and A. Pacheco. We show that the Sato-Tate conjecture based on the random matrix model implies Nagao's conjecture for certain twist families of elliptic curves and hyperelliptic curves.   
\end{abstract}

\subjclass[2010]{Primary 14D10; Secondary 11G35, 11G40, 14G25, 14J27}

\keywords{Sato-Tate conjecture, trace of Frobenius, Tate conjecture}

\maketitle

In \cite{NAG}, Nagao suggests a compelling conjecture relating the rank of an elliptic surface $\mathcal{E}$ to a limit formula arising from a weighted average of Frobenius traces from each fiber. Rosen and Silverman \cite{RS} showed Tate's conjecture on the vanishing of $L_{2}(\mathcal{E},s)$ implies Nagao's conjecture. On the other hand, we know from \cite{TATE1} Tate's conjecture implies the Sato-Tate conjecture for elliptic curves which is proven for elliptic curves defined over totally real field or having complex multiplication. In this note, we present some cases which Nagao's conjecture is true assuming the Sato-Tate conjecture. Moreover, using the Sato-Tate conjecture for abelian surfaces \cite{FKRS}, we prove some cases of the generalized Nagao conjecture (following the formulation of Hindry and Pacheco \cite{HP}) for higher genus curves.

\section{Introduction}
\noindent
Let $k/\mathbb{Q}$ be a number field. For a prime $\mathfrak{p}$ of $k$, denote by $\mathbb{F}_{\mathfrak{p}}$ the residue field of $\mathfrak{p}$ and $q_{\p}$ the norm of $\mathfrak{p}$, i.e., $q_{\mathfrak{p}}=\#\mathbb{F}_{\mathfrak{p}}$. Given a smooth projective curve $C$ defined over $k$, let $\mathcal{E}$ be a non-split elliptic surface $\mathcal{E}\rightarrow C$ defined over $k$ which is regular and proper over $C$. For a fixed prime $\mathfrak{p}$, we denote by $\tilde{\mathcal{E}}$ and $\tilde{C}$ over $\mathbb{F}_{\mathfrak{p}}$, respectively, the reductions of $\mathcal{E}$ and $C$ modulo $\mathfrak{p}$. Then there exists a finite set of primes $S$ such that for every prime outside of $S$, the reduced elliptic surface is regular and proper. $S$ could be larger or smaller as needed. Also, for $t \in \tilde{C}(\mathbb{F}_{\mathfrak{p}})$, if the fiber $\tilde{\mathcal{E}_{t}}$ is smooth, define its trace of Frobenius
$$a_{\mathfrak{p}}(\tilde{\mathcal{E}_{t}})=1-\#\tilde{\mathcal{E}_{t}}(\mathbb{F}_{\mathfrak{p}})+ q_{\mathfrak{p}}.$$
We drop the tilde on $\mathcal{E}$ if it is clear in the context. Also, we define
$$A_{\mathfrak{p}}(\mathcal{E})=\frac{1}{q_{\mathfrak{p}}}\sum_{t\in\tilde{C}(\mathbb{F}_{\mathfrak{p}})}a_{\mathfrak{p}}(\mathcal{E}_{t}),$$
we can state Nagao's conjecture \cite{RS}.

\begin{Conjecture}[Nagao's Conjecture over $k$ for elliptic surfaces]
$$\lim_{N\to\infty}\frac{1}{N}\sum_{\substack{\mathfrak{p} \\ q_{\mathfrak{p}}\leq N}}-A_{\mathfrak{p}}(\mathcal{E})\log q_{\mathfrak{p}}=\rank\mathcal{E}(C/k).$$
\end{Conjecture}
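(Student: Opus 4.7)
The plan is to reduce Nagao's conjecture to an analytic statement about the $L$-function of $\mathcal{E}$ and then verify that statement via the Sato--Tate hypothesis. Following the strategy of Rosen and Silverman, I would first rewrite the weighted prime sum as a partial Dirichlet series encoding the behavior at $s=2$ of the degree-two $L$-function $L_{2}(\mathcal{E},s)$ attached to $\mathcal{E}$. A Wiener--Ikehara tauberian argument identifies the limit
\[
\lim_{N\to\infty}\frac{1}{N}\sum_{q_{\mathfrak{p}}\leq N}-A_{\mathfrak{p}}(\mathcal{E})\log q_{\mathfrak{p}}
\]
with the order of vanishing of $L_{2}(\mathcal{E},s)$ at $s=2$, provided one has analytic continuation of $L_{2}$ past $\Re(s)=2$ together with appropriate non-vanishing on that line. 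The Tate conjecture then equates this order of vanishing with $\rank \mathcal{E}(C/k)$, so the remaining task is to establish the required analytic properties from a weaker hypothesis.

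The next step is to specialize to twist families, which is the setting where Sato--Tate can be brought in. Let $E$ be a fixed elliptic curve over $k$ and suppose $\mathcal{E}_{t}$ is the twist of $E$ by a character $\chi_{t}$ depending algebraically on $t\in C$, so that $a_{\mathfrak{p}}(\mathcal{E}_{t})=\chi_{t}(\mathfrak{p})\,a_{\mathfrak{p}}(E)$ on smooth fibers. Then
\[
A_{\mathfrak{p}}(\mathcal{E}) = \frac{a_{\mathfrak{p}}(E)}{q_{\mathfrak{p}}}\sum_{t\in\tilde{C}(\mathbb{F}_{\mathfrak{p}})}\chi_{t}(\mathfrak{p}),
\]
and the inner character sum can be evaluated by Jacobi/Gauss sum techniques; isolating the contribution of the principal character produces the main term and a remainder governed by twists of $a_{\mathfrak{p}}(E)$ by nontrivial characters.

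The third step invokes Sato--Tate for $E$, which in its automorphic formulation is equivalent to analytic continuation and non-vanishing on $\Re(s)=1$ of all the symmetric power $L$-functions $L(\mathrm{Sym}^{n}E,s)$. From these analytic inputs one extracts a prime number theorem of the form $\sum_{q_{\mathfrak{p}}\leq N}a_{\mathfrak{p}}(E)\log q_{\mathfrak{p}}=o(N)$, with any main term coming from a pole of the relevant Dirichlet series. Matching that main term against $\rank \mathcal{E}(C/k)$ yields Nagao's formula in the twist setting. The higher-genus cases should go through analogously, with $E$ replaced by a fixed Jacobian and Sato--Tate for elliptic curves replaced by its abelian-surface analogue.

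The principal obstacle will be controlling the character sum expansion uniformly: the sum over $t$ typically decomposes over many characters, and estimating the corresponding twisted prime sums requires more than pointwise Sato--Tate, namely a quantitative or uniform version amounting to something like a large-sieve statement for symmetric power $L$-functions. I would expect the argument to go through cleanly only for twist families in which the character sum decomposition is tame, which matches the paper's restriction to ``certain'' twist families of elliptic and hyperelliptic curves rather than a fully general statement.
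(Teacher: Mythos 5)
First, note that the statement you were asked about is the general form of Nagao's conjecture, which the paper neither proves nor claims to prove; it establishes the conjecture only for special twist families, so any ``proof'' must really be a proof of those special cases. With that understood, your proposal follows a genuinely different route from the paper's, and as written it has real gaps. You propose the Rosen--Silverman mechanism: encode the sum in the $L$-function $L_{2}(\mathcal{E},s)$, apply a Wiener--Ikehara Tauberian argument, and use the Tate conjecture to identify the order of vanishing at $s=2$ with $\rank\mathcal{E}(C/k)$, hoping to supply the needed analytic continuation and non-vanishing from Sato--Tate via symmetric power $L$-functions $L(\mathrm{Sym}^{n}E,s)$ and a large-sieve-type control of twisted sums. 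The whole point of the paper is to avoid exactly this: it never invokes the Tate conjecture, $L_{2}(\mathcal{E},s)$, Tauberian theorems, or symmetric powers, and your sketch does not explain how equidistribution of Frobenius angles alone would yield the analytic continuation and non-vanishing of $L_{2}$ on $\Re(s)=2$ that your Tauberian step requires --- that deduction is a major missing ingredient, not a technicality. A second concrete slip: the quantity that controls the limit in these families is the \emph{second} moment $\sum_{q_{\mathfrak{p}}\leq N}\mathfrak{f}_{\mathfrak{p}}^{2}/q_{\mathfrak{p}}$ of the fixed curve's Frobenius traces, not a first-moment bound $\sum a_{\mathfrak{p}}(E)\log q_{\mathfrak{p}}=o(N)$ as you write.

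The paper's actual argument in the special cases is far more elementary and exact. For surfaces of the form $D(T)y^{2}=f(x)$ with $J_{D}$ $\Q$-isogenous to $J_{f}^{r}$ (e.g.\ the quadratic twist case $D=f$), the fibral trace factors as $a_{\mathfrak{p}}(\mathcal{X}^{f}_{D,t})=\chi_{\mathfrak{p}}(D(t))\,\mathfrak{f}_{\mathfrak{p}}$, and the character sum $\sum_{t}\chi_{\mathfrak{p}}(D(t))$ is not estimated but computed exactly: it equals $-r\,\mathfrak{f}_{\mathfrak{p}}$ because it is (minus) the Frobenius trace of $J_{D}\sim J_{f}^{r}$. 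Hence $-A_{\mathfrak{p}}=r\,\mathfrak{f}_{\mathfrak{p}}^{2}/q_{\mathfrak{p}}$ identically, there are no error terms from non-principal characters, and no uniform or quantitative Sato--Tate input is needed --- only the plain equidistribution statement, which gives the limit as $r$ times the second moment of the normalized trace (read off from the Haar measure, or from the FKRS table in genus $2$). The rank side is handled algebraically, not via $L$-functions: a N\'eron-model/twisting argument (the paper's Proposition \ref{prop2.3}, generalizing Rosen--Silverman and Petersen) gives $\rank J(X^{f}_{D})(k(T))=\rank\Hom_{k}(J_{D},J_{f})=r\cdot\rank\End_{k}(J_{f})$, and one checks case by case that this equals the second-moment value. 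Finally, the passage between the $\log$-weighted average over $q_{\mathfrak{p}}\leq N$ and the unweighted average over $\pi_{k}(N)$ primes is an elementary partial-summation lemma (Lemma \ref{lem1}), not a Tauberian theorem. If you want to salvage your approach, you would have to either assume Tate's conjecture outright (reproducing Rosen--Silverman rather than the paper) or restrict, as the paper does, to families where the character sum identity is exact, in which case the $L$-function apparatus becomes unnecessary.
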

\bigskip


In this note, we are going to prove that, in certain cases, the Sato-Tate conjecture for elliptic curves and abelian surfaces imply Nagao's conjecture for elliptic curves and genus $2$ curves, respectively. The following are the main results.

\begin{Theorem}
Let $E$ be an elliptic curve over $k$ with complex multiplication by $F=\mathbb{Q}(\sqrt{-d}),$ i.e., $\End_{\bar{k}}(E)\otimes\mathbb{Q}=F$. Define a (non-split) surface $\E^{f}$ which is a nontrivial quadratic twist of $E: S^2=f(T)=T^3+aT+b$,
$$\E^{f} : y^2=x^3+f(T)^{2}ax+f(T)^{3}b,$$
where $a,b\in k.$ Then for a field $L$ over $k$,\\
(i) 
 $$\rank \E^{f}(L(T)) = \left\{ \begin{array}{ll} 
 1 & \textrm{if $F \not\subset L$}\\
 2 & \textrm{if $F \subset L$.} \end{array} \right. 
$$
(ii) Nagao's conjecture is true for $\E^{f}$.
\end{Theorem}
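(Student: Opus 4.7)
For part (i), my plan is to pull $\E^f$ back along the double cover $\pi\colon C\to\mathbb{P}^1$, where $C\colon S^2=f(T)$. On $k(C)$ the polynomial $f(T)=S^2$ is a square, so the explicit change of variables $\psi\colon(x,y)\mapsto(x/S^2,\,y/S^3)$ identifies $\E^f\otimes_{k(T)}k(C)$ with the constant curve $E\otimes k(C)$. The key observation is that $C$ and $E$ are isomorphic over $k$ via the tautological map $(T,S)\mapsto(T,S)$ since they share the same Weierstrass equation. Lang--N\'eron applied to the constant curve $E/L(C)$ therefore gives
\[
E(L(C))\otimes\Q \;\cong\; \bigl(E(L)\oplus \Hom_L(C,E)\bigr)\otimes\Q\;\cong\;\bigl(E(L)\oplus \End_L(E)\bigr)\otimes\Q,
\]
and $\End_L(E)$ has $\Z$-rank $1$ if $F\not\subset L$ and $2$ if $F\subset L$.

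To descend from $L(C)$ to $L(T)$, I would use that the cocycle computation $\psi^{\sigma}=[-1]_E\circ\psi$ (for $\sigma\colon S\mapsto -S$) identifies $\E^f(L(T))$, via $\psi$, with $\{Q\in E(L(C)):\sigma Q=-Q\}$. The natural Galois action of $\sigma$ on $E(L(C))$ is trivial on the summand $E(L)$ and acts by $-1$ on $\Hom_L(C,E)$, because the hyperelliptic involution of $C\cong E$ is $[-1]_E$ and $\phi\circ [-1]_E=-\phi$ for any group homomorphism $\phi$. Consequently the $(-\sigma)$-invariants modulo torsion coincide with $\Hom_L(C,E)\otimes\Q\cong \End_L(E)\otimes\Q$, which yields the asserted rank formula.

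For part (ii), the plan is first to evaluate $A_\mathfrak{p}(\E^f)$ in terms of $a_\mathfrak{p}(E)$. For $\mathfrak{p}$ of good reduction and $t\in\mathbb{F}_\mathfrak{p}$ with $f(t)\not\equiv 0$, the fiber is the quadratic twist of $\tilde{E}$ by $f(t)$, so $a_\mathfrak{p}(\E^f_t)=\chi_\mathfrak{p}(f(t))\,a_\mathfrak{p}(E)$, where $\chi_\mathfrak{p}$ is the quadratic residue symbol mod $\mathfrak{p}$. Summing over $t$ and using the point-count identity $\sum_{t\in\mathbb{F}_\mathfrak{p}}\chi_\mathfrak{p}(f(t))=-a_\mathfrak{p}(E)$ gives
\[
A_\mathfrak{p}(\E^f)\;=\;-\frac{a_\mathfrak{p}(E)^2}{q_\mathfrak{p}}+O\!\left(q_\mathfrak{p}^{-1/2}\right),
\]
where the error absorbs the bounded contribution of the finitely many singular fibers, so Nagao's conjecture for $\E^f$ reduces to showing that
\[
\lim_{N\to\infty}\frac{1}{N}\sum_{q_\mathfrak{p}\le N}\frac{a_\mathfrak{p}(E)^2}{q_\mathfrak{p}}\log q_\mathfrak{p}\;=\;\rank\E^f(k(T)).
\]
Since $E$ has complex multiplication, the Sato--Tate conjecture for $E$ is the classical theorem of Deuring and Hecke. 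Combining its equidistribution of $a_\mathfrak{p}(E)/\sqrt{q_\mathfrak{p}}$ with the prime number theorem for $k$ via partial summation converts the left-hand side into $\int 4\cos^2\theta\,d\mu_{ST}$, and direct evaluation of the integral gives $2$ if $F\subset k$ (the angle is uniform on $[0,\pi]$) and $1$ if $F\not\subset k$ (half of the primes are inert in $kF/k$, contributing $a_\mathfrak{p}=0$), in agreement with part (i).

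The main obstacle I anticipate is this last transfer from the equidistribution form of Sato--Tate to the log-weighted Nagao limit, with uniform control over the contribution of bad fibers and primes of bad reduction. For CM this can be handled via the factorization of the symmetric-square $L$-function into Hecke $L$-functions attached to $E$, whose analytic properties are classical, but the bookkeeping has to be bifurcated according to whether $F\subset k$ in order to read off the correct rank.
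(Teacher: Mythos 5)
Your proposal is correct and essentially matches the paper's argument: the identity $A_{\mathfrak p}(\E^f)=-\f_{\mathfrak p}^2/q_{\mathfrak p}$, the known CM equidistributions (uniform on $[0,\pi]$ when $F\subset L$, half-uniform plus an atom at $\pi/2$ otherwise) giving second moment $2$ resp.\ $1$, and the elementary partial-summation Lemma \ref{lem1} to convert the unweighted average into the log-weighted Nagao limit. Your rank computation via descent over $k(C)/k(T)$ and Lang--N\'eron for the constant curve is the same anti-invariant/eigenspace argument that the paper packages as Proposition \ref{prop2.3} (yielding $\rank\E^{f}(L(T))=\rank\End_{L}(E)$), combined with Silverman's theorem that all endomorphisms are defined over the compositum $LF$.
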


Moreover, we can prove:

\begin{Theorem}
Let $f(x) \in \mathbb{Q}[x]$ be an monic, squarefree polynomial of degree $3$ or $5$. Suppose that there is an automorphism $\sigma(x)\in \PGL_{2}(\mathbb{Q})$ which permutes the roots of $f$. Suppose furthermore the $\sigma(x)$ does not have a pole at $\infty$. Let $D(T)=f(\frac{T^2}{f(\sigma(\infty))})$. Define a surface $\mathcal{X}_{D}^{f}\rightarrow \PPP^{1}$ by
$$\mathcal{X}_{D}^{f}: D(T)y^{2}=f(x)$$
and assume the Jacobian of its generic fiber $X_{D}^{f}/\Q(T)$ has trivial $\mathbb{Q}(T)/\Q$-trace. Then the following hold.
\begin{enumerate}
\item
If $f(x)$ has degree $3$, Nagao's conjecture is true for $\mathcal{X}^{f}_{D}$.
\item
If $f(x)$ has degree $5$, the Sato-Tate conjecture for abelian surfaces implies Nagao's conjecture for $\mathcal{X}^{f}_{D}$ (Conjecture \ref{HPconj}).
~\\
 Moreover, if $f(x)$ is a self-reciprocal (palindromic) polynomial, we can choose $\sigma(x)=\frac{1}{x}$. Then Nagao's conjecture is true for the surface
$$f(T^2)y^{2}=f(x).$$  
\end{enumerate}
\end{Theorem}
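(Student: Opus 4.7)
My strategy is to express $-A_p(\mathcal{X}_D^f)$ as a quadratic polynomial in Frobenius traces of two auxiliary hyperelliptic curves, and then to evaluate the Nagao limit via the Sato--Tate equidistribution. Each fiber $\mathcal{X}_t : D(t) y^2 = f(x)$ is the quadratic twist of $C : y^2 = f(x)$ by $D(t)$, so $a_p(\mathcal{X}_t) = \chi_p(D(t))\, a_p(C)$, with $\chi_p = (\tfrac{\cdot}{p})$ the quadratic character mod $p$. Hence
\[
-A_p(\mathcal{X}_D^f) \;=\; -\frac{a_p(C)}{p} \sum_{t\in\mathbb{F}_p} \chi_p\bigl(f(t^2/c)\bigr), \qquad c = f(\sigma(\infty)).
\]
Substituting $u = t^2/c$ (each nonzero $u$ has $1+\chi_p(cu)$ preimages) and using the standard formulas $\sum_u \chi_p(f(u)) = -a_p(C)$ and $\sum_u \chi_p(uf(u)) = -a_p(C') + O(1)$ (obtained from $\#C(\mathbb{F}_p)$ and $\#C'(\mathbb{F}_p)$, where $C' : y^2 = xf(x)$), the inner sum becomes $-a_p(C) - \chi_p(c)\,a_p(C') + O(1)$. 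Therefore
\[
-A_p = \frac{a_p(C)^2}{p} + \chi_p(c)\,\frac{a_p(C)\,a_p(C')}{p} + O(p^{-1/2}).
\]

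The Nagao limit thus decomposes as $L_1 + L_2$, where $L_1 = \lim_N \tfrac{1}{N}\sum_{p\le N}\tfrac{a_p(C)^2}{p}\log p$ and $L_2 = \lim_N \tfrac{1}{N}\sum_{p\le N}\chi_p(c)\tfrac{a_p(C)\,a_p(C')}{p}\log p$. For $L_1$, the Sato--Tate conjecture for $\mathrm{Jac}(C)$ combined with the prime number theorem gives the average value $1$ for $a_p(C)^2/p$, hence $L_1 = 1$; analytically, this is the Wiener--Ikehara theorem applied to $\sum a_p(C)^2\, p^{-s}$, whose simple pole at $s=2$ with residue $1$ comes from the unique symplectic invariant in $H^1(C) \otimes H^1(C)$ (equivalently, from the $\zeta(s-1)$-factor of the Rankin--Selberg decomposition $L(H^1\otimes H^1, s) = \zeta(s-1) \cdot L(\mathrm{Sym}^2\mathrm{Jac}(C), s)$, up to Euler factors at bad primes). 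For $\deg f = 3$ this is unconditional since Sato--Tate is a theorem for elliptic curves over $\mathbb{Q}$; for $\deg f = 5$ it is precisely the hypothesized Sato--Tate conjecture for abelian surfaces.

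For $L_2$, the value depends on whether $C$ and $C'$ are linked by an isogeny after a twist by $\chi_c$. For generic $\sigma$ no such isogeny exists, the convolution series $\sum \chi_p(c)\, a_p(C)\, a_p(C')\, p^{-s}$ is analytic at $s=2$, and $L_2 = 0$; the Nagao limit equals $1$, matching the rank of $\mathrm{Jac}(X_D^f)(\mathbb{Q}(T))$, which contains the non-torsion section $(x,y) = (T^2/c, 1)$ (one verifies $D(T)\cdot 1 = f(T^2/c) = f(x)$). In the self-reciprocal case $\sigma(x) = 1/x$, the map $(x,y)\mapsto(1/x, y/x^{(n+1)/2})$ is an isomorphism $C \xrightarrow{\sim} C'$ (since $x^n f(1/x) = f(x)$), so $a_p(C) = a_p(C')$; combined with $c = f(0) = 1$ this gives $\chi_p(c) = 1$ and $L_2 = L_1 = 1$, yielding Nagao limit $2$. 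This matches the rank $\geq 2$ witnessed by the additional section $(1/T^2, 1/T^n)$, which satisfies $f(T^2)\cdot T^{-2n} = f(1/T^2)$ by palindromicity.

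The principal obstacle is the Sato--Tate input needed for $L_1$ in the abelian-surface case, which is exactly the hypothesized ingredient. A secondary technical point is the vanishing of $L_2$ for non-palindromic $\sigma$, where one must rule out an accidental isogeny between $C$ and the $\chi_c$-twist of $C'$. The matching upper bound on the rank follows from the Rosen--Silverman/Hindry--Pacheco framework, which under the trivial $\mathbb{Q}(T)/\mathbb{Q}$-trace hypothesis bounds the rank by the order of vanishing of $L_2(\mathcal{X}_D^f, s)$ at $s=2$, coinciding with the Nagao limit computed above.
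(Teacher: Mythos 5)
Your opening reduction ($a_p(\mathcal{X}_t)=\chi_p(D(t))\,a_p(C)$ and the substitution $u=t^2/c$ splitting $\sum_t\chi_p(D(t))$ into contributions from $C:y^2=f(x)$ and $C':y^2=xf(x)$) is a legitimate alternative to the paper's use of Petersen's theorem, but the two conclusions you draw from it are incorrect, and they fail precisely because you never use the hypothesis that $\sigma\in\PGL_{2}(\Q)$ permutes the roots of $f$. Under that hypothesis Petersen's result (Theorem \ref{thm16}, the engine of the paper's proof) gives a $\Q$-isogeny $J_{D}\sim J_{f}\times J_{f}$; in your language this says exactly that the $\chi_c$-twist of $\mathrm{Jac}(C')$ \emph{is} $\Q$-isogenous to $J_{f}$, so the ``accidental isogeny'' you propose to rule out is always present, $L_2=L_1$, and the Nagao average is $2\lim_{N}\frac{1}{\pi(N)}\sum_{p<N}\f_{p}^{2}/p$, not $1$. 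Moreover $L_1$ itself is not $1$ in general: the second moment of $a_p(C)/\sqrt{p}$ equals $\rank_{\R}(\End_{\Q}(J_{f})\otimes\R)$, which by Table \ref{my-label} can be $1$, $2$ or $4$ in genus $2$ (for instance $y^2=x^5-x$ satisfies your hypotheses with $\sigma(x)=-x$ and has second moment $2$); your appeal to the unique symplectic invariant in $H^1\otimes H^1$ is valid only for the generic Sato--Tate group $\USp(4)$.

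These errors propagate to the rank side: by Theorem \ref{thm16}(2) (equivalently Proposition \ref{prop2.3} combined with $J_{D}\sim J_{f}^{2}$) one has $\rank J(X^{f}_{D})(\Q(T))=2\rank\End_{\Q}(J_{f})\geq 2$, never $1$, so your claimed match ``limit $=1=$ rank'' in the non-palindromic case is wrong on both sides; the section $(T^2/c,1)$ you exhibit has a companion coming from the second copy of $J_{f}$ inside $J_{D}$. In the palindromic case your limit $2$ and rank bound $\geq 2$ agree only when $\End_{\Q}(J_{f})=\Z$, and in any case a lower bound on the rank does not prove Nagao's conjecture: one needs exact equality, which the paper obtains by matching $2\,E[a_p^2/p]$ against $2\rank\End_{\Q}(J_{f})$ entry by entry in Table \ref{my-label} (that numerical coincidence is the actual content of Theorems \ref{thm2} and \ref{thm20}). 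Finally, the proposed matching upper bound via the order of vanishing of $L_{2}(\mathcal{X}^{f}_{D},s)$ is only available assuming Tate's conjecture (Rosen--Silverman/Hindry--Pacheco), which is exactly the input the paper is structured to avoid.
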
 

We can also find the following generalization of the above theorem.

\begin{Theorem}
\label{Thethm}
Let $y^{2}=f(x)$ be an elliptic curve or a hyperelliptic curve of genus $2$ defined over $\Q$. We consider a surface $\mathcal{X}^{f}_{D}\rightarrow \PPP^{1}$ which is defined using a hyperelliptic curve $y^2=D(T)\in\Q[T]$
$$\mathcal{X}^{f}_{D}: D(T)y^{2}=f(x).$$
Denote by $J_{f}$ and $J_{D}$ the Jacobian defined with the curves $y^2=f(x)$ and $y^2=D(T)$ respectively. Also, denote the generic fiber of $\mathcal{X}^{f}_{D}$ by $X^{f}_{D}/\Q(T)$, and we assume that $X^{f}_{D}/\Q(T)$ has trivial $\Q(T)/\Q$-trace.    
Assume that the Jacobian of $D$ has a ($\Q$-isogenous) factorization as $r$ copies of $J_{f}$, i.e., $J_{D}\sim J_{f}^r$. Then the following hold.
\begin{enumerate}
\item
If $y^{2}=f(x)$ is an elliptic curve, Nagao's conjecture is true for $\mathcal{X}^{f}_{D}$.
\item
If $y^{2}=f(x)$ is a hyperelliptic curve of genus $2$, the Sato-Tate conjecture for abelian surfaces implies Nagao's conjecture for $\mathcal{X}^{f}_{D}$ (Conjecture \ref{HPconj}).
\end{enumerate}
\end{Theorem}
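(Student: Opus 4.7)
The plan is to parallel Theorem 1.2: factor $A_\p(\mathcal{X}^f_D)$ as a product of the Frobenius traces of $J_f$ and $J_D$, use the isogeny $J_D\sim J_f^r$ to collapse the product into a weighted Rankin-Selberg mean of $a_\p(J_f)^2$, and then evaluate this mean via Sato-Tate. The role of the isogeny hypothesis is to produce a square $a_\p(J_f)^2$ whose average is controllable, while the trivial-trace hypothesis keeps the Mordell-Weil rank of $X^f_D/\Q(T)$ finite and identifies it with a $\Hom$ group.

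For $t\in\FF_\p$ with $D(t)\ne 0$ and good reduction, the fiber $(\mathcal{X}^f_D)_t$ is the quadratic twist of $y^2=f(x)$ by $D(t)$, hence $a_\p\bigl((\mathcal{X}^f_D)_t\bigr)=\bigl(\tfrac{D(t)}{\p}\bigr)a_\p(J_f)$. Summing over $t\in\tilde{\PPP}^1(\FF_\p)$ and absorbing $t=\infty$ together with the finitely many bad fibers into a bounded correction,
$$A_\p(\mathcal{X}^f_D)=\frac{a_\p(J_f)}{q_\p}\sum_{t\in\FF_\p}\Bigl(\frac{D(t)}{\p}\Bigr)+O\!\left(\frac{|a_\p(J_f)|}{q_\p}\right).$$
A direct point-count on $C_D:y^2=D(T)$ gives $\sum_t\bigl(\tfrac{D(t)}{\p}\bigr)=-a_\p(J_D)+O(1)$; the hypothesis $J_D\sim J_f^r$ yields $a_\p(J_D)=r\,a_\p(J_f)$ off a finite set of primes; and the Weil bound $|a_\p(J_f)|\ll q_\p^{1/2}$ combines these to give
$$-A_\p(\mathcal{X}^f_D)\log q_\p=\frac{r\,a_\p(J_f)^2\log q_\p}{q_\p}+O\!\left(\frac{\log q_\p}{q_\p^{1/2}}\right).$$

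Averaging over $q_\p\leq N$ makes the error $o(1)$ (since $\sum_{p\leq N}\log p/\sqrt{p}=O(\sqrt{N})$), so Nagao's conjecture for $\mathcal{X}^f_D$ reduces to the identity
$$r\cdot\lim_{N\to\infty}\frac{1}{N}\sum_{q_\p\leq N}\frac{a_\p(J_f)^2\log q_\p}{q_\p}=\rank\mathcal{X}^f_D(\Q(T)).$$
By partial summation the limit on the left equals the Sato-Tate expectation of $a_\p(J_f)^2/q_\p$, i.e.\ the $L^2$-norm of the character of the standard representation of the Sato-Tate group of $J_f$, which by Schur orthogonality (and Tate's theorem) equals $\rank_\Z\End_\Q(J_f)$. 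For case (1), Sato-Tate is an unconditional theorem for elliptic curves over $\Q$ (CM and non-CM alike); for case (2), we invoke the Sato-Tate conjecture for abelian surfaces from \cite{FKRS}.

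On the rank side, the quadratic-twist splitting
$$J_f(\Q(C_D))\otimes\Q\cong J_f(\Q(T))\otimes\Q\oplus J_{X^f_D}(\Q(T))\otimes\Q,$$
together with $J_f(\Q(T))=J_f(\Q)$ and the Lang-N\'eron identification $J_f(\Q(C_D))/J_f(\Q)\otimes\Q\cong\Hom_\Q(J_D,J_f)\otimes\Q$ (valid thanks to the trivial $\Q(T)/\Q$-trace hypothesis) yields $\rank\mathcal{X}^f_D(\Q(T))=\rank\Hom_\Q(J_f^r,J_f)=r\cdot\rank_\Z\End_\Q(J_f)$, matching the Sato-Tate side. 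The main technical obstacle, I expect, is this coincidence-of-two-ranks step: verifying that the Sato-Tate $L^2$-norm equals $\rank_\Z\End_\Q(J_f)$ via Tate's theorem (known for both elliptic curves and abelian surfaces over $\Q$) and that the Lang-N\'eron computation returns precisely $r\cdot\rank_\Z\End_\Q(J_f)$, together with uniform $O(1)$ control on the bad-prime corrections so that the averaged error vanishes.
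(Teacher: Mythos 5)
Your proposal is correct and follows essentially the same route as the paper: the fibral twist identity $a_{p}(\mathcal{X}^{f}_{D,t})=\bigl(\tfrac{D(t)}{p}\bigr)\f_{p}$, the isogeny $J_{D}\sim J_{f}^{r}$ forcing $a_{p}(J_{D})=r\f_{p}$, the reduction to $r$ times the second moment of $\f_{p}/\sqrt{p}$ under Sato--Tate, and the rank identification $\rank J(X^{f}_{D})(\Q(T))=\rank\Hom_{\Q}(J_{f}^{r},J_{f})=r\cdot\rank\End_{\Q}(J_{f})$ via the quadratic-twist/Hom proposition (Proposition \ref{prop2.3}) and Lemma \ref{lem1}. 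The only cosmetic differences are your explicit $O(1)$ bookkeeping at bad fibers and your explanation of the moment-equals-endomorphism-rank coincidence by Schur orthogonality, where the paper instead reads it off case by case from Table \ref{my-label}.
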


\begin{Remark}
The above condition in Theorem \ref{Thethm} on the factorization of the jacobian of a curve is related to many interesting questions. In general, it is not easy to find a hyperelliptic curve $C$ with the jacobian $J(C)$ satisfying
$$J(C)\sim E^{r}\times B$$
for some elliptic curve $E$ and abelian variety $B$ for $r\geq 4$ or with $r=3$ and $\dim B\leq 1$. One can refer \cite[Problem 6.1]{RuSil}. For $r=2$ case, one can find a nice family of curves in Theorem \ref{thm16} and in \cite[Theorem 3.1, Remark 6.3]{RuSil}.
\end{Remark}

\begin{Theorem}
Let $E: y^2=f(x)$ be an elliptic curve over $\Q$. We can consider a surface $\mathcal{X}^{f}_{D}\rightarrow \PPP^{1}$ which is defined using a hyperelliptic curve $y^2=D(T)\in\Q[T]$
$$\mathcal{X}^{f}_{D}: D(T)y^{2}=f(x).$$
Denote by $J_{f}$ and $J_{D}$ the Jacobian defined with the curves $y^2=f(x)$ and $y^2=D(T)$ respectively. Also, denote the generic fiber of $\mathcal{X}^{f}_{D}$ by $X^{f}_{D}/\Q(T)$, and we assume that $X^{f}_{D}/\Q(T)$ has trivial $\Q(T)/\Q$-trace.  
Assume that the Jacobian of $D$ has a ($\Q$-isogenous) factorization $J_{D}\sim E^r\times E_{1} \times E_{2} \times \cdots \times E_{s} $,
where $E_{i}$ are elliptic curves without complex multiplication which are not isogenous to $E$ for $i=1,2,\cdots ,s$. Then Nagao's conjecture is true for $\mathcal{X}^{f}_{D}$.
\end{Theorem}

\section{Nagao's conjecture and the Sato-Tate conjecture for elliptic curves}

In \cite{RS} Rosen and Silverman consider an elliptic surface $\mathcal{E}$ that is a nontrivial twist (by a modular elliptic curve) of an elliptic curve. By proving Tate's conjecture on $\mathcal{E}$ and the nonvanishing of $L_{2}(\mathcal{E}/K,s)$ on $\Re(s)=2$, they showed Nagao's conjecture is true on the twist. We can also prove a special case of Nagao's conjecture just assuming the Sato-Tate conjecture in a more explicit way. Also, we can drop the modularity condition. First, we recall the exact statement of the Sato-Tate conjecture for elliptic curves.

\begin{Conjecture}[Sato-Tate Conjecture]
\label{ST}
Let $E$ be an elliptic curve defined over a number field $k$. For a prime $\mathfrak{p}$ of $k$, denote its absolute norm by $q_{\mathfrak{p}}$. For all but finitely many primes, we can consider the reduction of $E$ at $\mathfrak{p}$, which is again an elliptic curve over the residue field $\mathbb{F}_{\mathfrak{p}}$. And the trace of Frobenius is defined by
\begin{displaymath}
a_{\mathfrak{p}}(E)=1-\#E(\mathbb{F}_{\mathfrak{p}})+q_{\mathfrak{p}}.
\end{displaymath}
From Hasse's theorem, we know $|a_{\mathfrak{p}}(E)|\leq 2\sqrt{q_{\mathfrak{p}}}$. One defines $\theta_{\mathfrak{p}}$ by
$$\frac{a_{\mathfrak{p}}(E)}{2\sqrt{q_{\mathfrak{p}}}}=\cos\theta_{\mathfrak{p}}.$$
The Sato-Tate conjecture tells us the distribution of the trace of Frobenius in terms of the equidistribution with respect to a measure on $[0,\pi]$ of $\theta_{p}$. More precisely,
\begin{flushleft} 
\begin{verse}
\noindent
If $E$ does not have complex multiplication, then the set $\{\theta_{\mathfrak{p}}\}$ is equidistributed with respect to the measure $(2/\pi)\sin^{2}\theta$. Thus, for an interval $[\alpha,\beta]\subset[0,\pi]$,
$$\lim_{X\to \infty}\frac{\#\{\mathfrak{p} : q_{\mathfrak{p}}\leq X,~\alpha\leq\theta_{\mathfrak{p}}\leq\beta\}}{\{\mathfrak{p}:q_{\mathfrak{p}
}\leq X\}}=\frac{2}{\pi}\int^{\beta}_{\alpha}\sin^{2}\theta d\theta.$$
\end{verse} 
\end{flushleft} 
\end{Conjecture}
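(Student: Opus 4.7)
The plan is to reduce the equidistribution claim to analytic properties of symmetric power $L$-functions via Weyl's criterion. The Sato-Tate measure $\mu_{ST} = (2/\pi)\sin^{2}\theta\,d\theta$ on $[0,\pi]$ is the pushforward of Haar measure on $\mathrm{SU}(2)$ to its space of conjugacy classes, and the Chebyshev polynomials of the second kind $U_{n}(\cos\theta) = \sin((n+1)\theta)/\sin\theta$, $n\geq 0$, are the characters of the irreducible representations $\mathrm{Sym}^{n}$ of $\mathrm{SU}(2)$ and hence form an orthonormal basis of $L^{2}([0,\pi],\mu_{ST})$. By Weyl's equidistribution criterion it is enough to prove that for every integer $n\geq 1$,
$$\lim_{X\to\infty}\frac{1}{\pi_{k}(X)}\sum_{q_{\p}\leq X} U_{n}(\cos\theta_{\p}) = 0,$$
where $\pi_{k}(X)=\#\{\p:q_{\p}\leq X\}$.

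Under the standard identification $U_{n}(\cos\theta_{\p}) = q_{\p}^{-n/2}\,\tr(\Frob_{\p}\mid \mathrm{Sym}^{n}V_{\ell}(E))$, the displayed sum is, up to normalization, a Dirichlet partial sum for the symmetric power $L$-function $L(\mathrm{Sym}^{n}E,s)$. A standard Wiener--Ikehara (or Landau) Tauberian argument then shows that the required cancellation follows once one establishes, for every $n\geq 1$: (i) $L(\mathrm{Sym}^{n}E,s)$ admits meromorphic continuation to a neighborhood of $\Re(s)\geq 1+n/2$; (ii) it is holomorphic on the line $\Re(s)=1+n/2$; and (iii) it is non-vanishing on that line. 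The strategy I would pursue to obtain (i)--(iii) is the potential automorphy program of Clozel--Harris--Shepherd-Barron--Taylor and Barnet-Lamb--Geraghty--Harris--Taylor: attach to $\mathrm{Sym}^{n}E$ the compatible system of $(n+1)$-dimensional $\ell$-adic Galois representations $\rho_{n,\ell}$; over an auxiliary totally real (or CM) extension $F/k$, use a geometric family of the Dwork/Katz--Mazur type to produce a residual congruence between $\rho_{n,\ell}|_{G_{F}}$ and the mod-$\ell$ representation attached to a known self-dual cuspidal automorphic form on $\GL_{n+1}(\mathbb{A}_{F})$; then invoke an $(n+1)$-dimensional automorphy lifting theorem to conclude that $\rho_{n,\ell}|_{G_{F}}$ is itself automorphic. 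Brauer induction from $F$ down to $k$ combined with Jacquet--Shalika non-vanishing for Rankin--Selberg $L$-functions on $\Re(s)=1$ (together with Shahidi's boundary analysis) then delivers (i)--(iii) simultaneously.

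The hard part will be the automorphy lifting step uniformly in $n$: one needs a modularity lifting theorem applicable to the symmetric powers of a non-CM elliptic curve in arbitrary dimension, together with verification of the residual \emph{big image} and irreducibility hypotheses for $\rho_{n,\ell}$, delicate control of the local Weil--Deligne representations at primes of bad reduction and at $\ell$, and an automorphy base case strong enough to feed the induction. A secondary obstacle is the descent from potential automorphy over $F$ to the original field $k$: this is why the conclusion must be packaged at the level of $L$-functions through Brauer induction rather than as genuine automorphy of $\mathrm{Sym}^{n}E/k$, but such analytic continuation and non-vanishing is already precisely what the Weyl-criterion reduction demands. Finally, since the statement excludes the CM case, I would not need the (classical, Hecke--Deuring) argument that in the CM regime the Galois representation is induced from a Hecke character and the equidistribution is governed by a different measure.
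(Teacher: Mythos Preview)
The paper does not prove this statement: it is stated as Conjecture~\ref{ST} and is used throughout as an \emph{input hypothesis}, not as a result the paper establishes. The surrounding Remarks simply record that the non-CM case is known over totally real fields by the work of Clozel, Harris, Shepherd-Barron, and Taylor, and that the CM cases are classical; no argument is given in the paper itself. So there is no ``paper's own proof'' to compare your proposal against.

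That said, your outline is an accurate sketch of the strategy in the cited references \cite{CHT,HST,TAY} (and its completion by Barnet-Lamb--Geraghty--Harris--Taylor): Weyl's criterion via the $U_n$, reduction to holomorphy and non-vanishing of $L(\mathrm{Sym}^n E,s)$ on the critical line, potential automorphy of the symmetric powers over an auxiliary totally real extension, and Brauer induction plus Jacquet--Shalika/Shahidi to descend the analytic properties. As a high-level plan this is correct; as a self-contained proof it is of course only a roadmap, since the modularity lifting theorems and the residual-image verifications you flag are each substantial papers in their own right. For the purposes of this paper, however, none of that is needed: the author treats Sato--Tate as a black box, and your task would simply be to state the conjecture, cite the literature for the cases where it is known, and move on.
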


\begin{Remark}
When $E$ has complex multiplication, the following are already proven. We adopt the same notation of Conjecture \ref{ST}
\begin{enumerate}
\item If $E$ has complex multiplication which is defined over $k$, then the set $\{\theta_{\mathfrak{p}}\}$ is equidistributed with respect to the measure $\frac{1}{\pi} d\theta$.
\item If $E$ has complex multiplication which is not defined over $k$, then $\theta_{\mathfrak{p}}$ is equidistributed with respect to the measure $$\frac{1}{2}\delta_{0}+\frac{1}{2\pi}\frac{dz}{\sqrt{4-z^2}}=\frac{1}{2\pi}d\theta,$$
where $\delta_{0}$ is the Dirac measure at $0$ and $z=\cos\theta$. 
\end{enumerate}
One can find the proof of the above statements in \cite[Corollary 3.8]{FS}.
\end{Remark}

\begin{Remark}
The Sato-Tate conjecture is known to follow from Tate's conjecture \cite{TATE1}. The Sato-Tate conjecture for elliptic curves with complex multiplication is proven relatively easily. Recently, the Sato-Tate conjecture over totally real fields was proven jointly by R. Taylor, L. Clozel, M. Harris, and N. Shepherd-Barron \cite{CHT}\cite{HST}\cite{TAY}. The Sato-Tate conjecture for genus 2, i.e., the limiting behavior of the $L$-fuction of an abelian surface, has been studied by Fit\'e, Kedlaya, Rotger, and Sutherland \cite{FKRS}. The generalized Sato-Tate conjecture in schematic formultation is nicely exposed in Serre's book \cite{SER}. Also, one can find the motivic generalization in \cite{SER1}
\end{Remark}

In this section, we are mostly interested in surfaces which are defined using quadratic twists. For applying the Sato-Tate conjecture in our case, we need the following lemma.

\begin{Lemma}
\label{lem1}
Let $k$ be a number field. Let $\{s_{\mathfrak{p}}\}_{\mathfrak{p}}$ be a bounded sequence of non-negative integers which is defined for each prime ideal $\mathfrak{p}$ of $k$. For a positive integer $N$, define a set of prime ideals with norm less than $N$,
$$T_{N}=\{\mathfrak{p}:~ \text{a prime in $k$ with}~ q_{\mathfrak{p}}< N\}.$$
We also define $\pi_{k}(N)=\#T_{N}$. If one of the following sequences converges
\begin{equation}
\Bigg\{\frac{1}{N}\sum_{\mathfrak{p}\in T_{N}}s_{\mathfrak{p}}\cdot\log q_{\mathfrak{p}}\Bigg\}_{N\geq 1}~~~\text{and}~~~~~~~\Bigg\{\frac{1}{\pi_{k}(N)}\sum_{\mathfrak{p}\in T_{N}}s_{\mathfrak{p}}\Bigg\}_{N\geq 1},
\end{equation}
then both converge to the same limit.
\end{Lemma}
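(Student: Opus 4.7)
The plan is to use partial summation (Abel summation) together with Landau's prime ideal theorem for the number field $k$, which gives $\pi_k(N)\sim N/\log N$. The point is that, under the boundedness hypothesis on $\{s_\mathfrak{p}\}$, the two averages in the statement differ by an explicit quantity that tends to $0$, so their convergence is equivalent.

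First, I would set $B(t):=\sum_{\mathfrak{p},\,q_\mathfrak{p}\le t}s_\mathfrak{p}$ and write the weighted sum as a Stieltjes integral $\int_{2}^{N}\log t\,dB(t)$. Integration by parts yields
$$\sum_{\mathfrak{p}\in T_N}s_\mathfrak{p}\log q_\mathfrak{p}\;=\;B(N)\log N\;-\;\int_{2}^{N}\frac{B(t)}{t}\,dt.$$
Using $s_\mathfrak{p}\le C$ and Landau's theorem one gets $B(t)\le C\pi_k(t)=O(t/\log t)$, so the integral on the right is $O(N/\log N)=o(N)$. Dividing by $N$ therefore gives
$$\frac{1}{N}\sum_{\mathfrak{p}\in T_N}s_\mathfrak{p}\log q_\mathfrak{p}\;=\;\frac{B(N)\log N}{N}+o(1).$$

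Next, I would factor the leading term as
$$\frac{B(N)\log N}{N}\;=\;\frac{B(N)}{\pi_k(N)}\cdot\frac{\pi_k(N)\log N}{N}.$$
By Landau the second factor tends to $1$, while the first is bounded by $C$. Combining this with the previous display one obtains
$$\frac{1}{N}\sum_{\mathfrak{p}\in T_N}s_\mathfrak{p}\log q_\mathfrak{p}\;-\;\frac{1}{\pi_k(N)}\sum_{\mathfrak{p}\in T_N}s_\mathfrak{p}\;\longrightarrow\;0,$$
so if either sequence converges the other does as well, and to the same value.

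I do not anticipate a serious obstacle: the whole argument is one partial summation plus Landau's prime ideal theorem for $k$. The boundedness hypothesis is doing all the work; non-negativity of $s_\mathfrak{p}$ is not strictly needed beyond the fact that it makes $B(t)$ non-decreasing and so makes the bound $B(t)\le C\pi_k(t)$ entirely transparent.
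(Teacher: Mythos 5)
Your argument is correct, but it follows a different route from the paper. The paper reproduces Nagao's elementary device: writing $C_N=\sum_{\mathfrak{p}\in T_N}s_{\mathfrak{p}}\log q_{\mathfrak{p}}$ and $D_N=\sum_{\mathfrak{p}\in T_N}s_{\mathfrak{p}}$, it bounds $C_N<D_N\log N$ from above and, after splitting the sum at $q_{\mathfrak{p}}=N^{\delta}$, bounds $C_N>\delta\log N\cdot D_N-\delta\log N\cdot M N^{\delta}$ from below; combining with Landau's prime ideal theorem $\pi_k(N)\sim N/\log N$ and letting $\delta\to1$ gives matching $\limsup$/$\liminf$ inequalities between $C_N/N$ and $D_N/\pi_k(N)$, from which the equivalence of the two limits follows once one of them is assumed to converge. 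You instead run a single partial summation, $\sum_{q_{\mathfrak{p}}\le N}s_{\mathfrak{p}}\log q_{\mathfrak{p}}=B(N)\log N-\int_2^N B(t)\,dt/t$, bound the integral by $O(N/\log N)$ using $B(t)\le C\,\pi_k(t)$ and Landau, and then compare $B(N)\log N/N$ with $B(N)/\pi_k(N)$ via the bounded factor $B(N)/\pi_k(N)$ times $\pi_k(N)\log N/N-1\to0$. This yields the cleaner and slightly stronger statement that the \emph{difference} of the two averages tends to $0$ (in fact with an explicit $O(1/\log N)$-type error from the integral term), so the equivalence of convergence is immediate and symmetric, with no $\delta\to1$ bookkeeping; the paper's approach buys elementarity (no Stieltjes integration, only splitting and comparison of sums), and its inequalities are used exactly where the convergence hypothesis makes the $\limsup$/$\liminf$ manipulations legitimate. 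The only points to tidy in your write-up are boundary conventions: $T_N$ uses the strict inequality $q_{\mathfrak{p}}<N$ while your $B(N)$ uses $q_{\mathfrak{p}}\le N$, and $B(t)\le C\pi_k(t)$ likewise has a strict/non-strict mismatch; since at most $[k:\mathbb{Q}]$ primes have a given norm and $s_{\mathfrak{p}}$ is bounded, these discrepancies contribute $O(\log N/N)$ and $O(1/\pi_k(N))$ respectively and do not affect the limits. Your closing remark is also accurate: boundedness is what drives the proof, and non-negativity can be dispensed with by working with $|B(t)|\le C\pi_k(t)$, whereas the paper's term-dropping step implicitly uses non-negativity.
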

\begin{proof}
The proof is similar to the proof of Nagao \cite{NAG} for $k=\mathbb{Q}$. We can choose a constant $M>0$ such that $|s_{\mathfrak{p}}|<M$ for all prime ideals $\mathfrak{p}$. Moreover, denote the $N$-th term of sequences
$$C_{N}=\sum_{\mathfrak{p}\in T_{N}}s_{\mathfrak{p}}\cdot \log q_{\mathfrak{p}}~~~~~~~~~~D_{N}=\sum_{\mathfrak{p}\in T_{N}}s_{\mathfrak{p}}.$$
From the definition, we get an obvious inequality
$$C_{N} < D_{N}\log N.$$
On the other hand, for $0<\delta<1$, we get the following inequalities
\begin{align*}
C_{N}
 & =\sum_{\mathfrak{p}\in T_{N}}s_{\mathfrak{p}}\cdot \log q_{\mathfrak{p}}\\
 & \geq \sum_{N^{\delta}<q_{\mathfrak{p}}<N}s_{\mathfrak{p}}\log q_{\mathfrak{p}}\\
 & \geq \delta \cdot\log N \sum_{N^{\delta}<q_{\mathfrak{p}}<N}s_{\mathfrak{p}}\\
 & =\delta\cdot \log N \sum_{q_{\mathfrak{p}}<N}s_{\mathfrak{p}}-\delta\cdot\log N \sum_{q_{\mathfrak{p}}\leq N^{\delta}}s_{\mathfrak{p}}\\
 & \geq \delta \cdot \log N \sum_{\mathfrak{p}\in T_{N}}s_{\mathfrak{p}}-\delta\cdot\log N \cdot M\cdot N^{\delta}\\
 & > \delta \cdot \log N \cdot D_{N}-\delta\cdot\log N \cdot M\cdot N^{\delta}.
\end{align*} 
And we know $\pi_{k}(N)=\#T_{N}\sim N/\log(N)$ by the Landau prime ideal theorem. Therefore
\begin{equation}
\limsup_{N\to \infty}\frac{C_{N}}{N}\leq \liminf_{N\to\infty}D_{N}\cdot\frac{\log N}{N}=\liminf_{N \to\infty}\frac{D_{N}}{\pi_{k}(N)}
\end{equation}
and
\begin{equation}
\liminf_{N\to\infty}\frac{C_{N}}{N}\geq \limsup_{N\to\infty}\delta\cdot D_{N}\cdot\frac{\log N}{N}=\limsup_{N\to\infty}\delta \cdot \frac{D_{N}}{\pi_{k}(N)}.
\end{equation}
Thus, the result follows.
\end{proof}

The above lemma enables us to consider a slightly simpler form of Nagao's sum,
$$\lim_{N\to\infty}\frac{1}{\pi_{k}(N)}\sum_{\substack{\mathfrak{p} \\ q_{\mathfrak{p}}\leq N}}-A_{\mathfrak{p}}(\mathcal{E})=\rank\mathcal{E}(C/k).$$
We will write $\pi_{k}(N)=\pi(N)$ if $k$ is defined clearly in the context. In the following, we often consider the above sum instead of the original form. We begin with the following proposition. 

\begin{Proposition}
\label{Prop1}
Let $\E$ be a (non-split) elliptic surface over $\mathbb{Q}$ with $f(T)=T^3+T\in\mathbb{Q}[T]$,
$${\E}: y^2= x^3+f(T)^2x.$$
where $E: S^2=T^3+T$ is an elliptic curve with complex multiplication by $\Z[\sqrt{-1}]$. If we denote
$$A_{p}({\E})=\frac{1}{p}\sum_{t=0}^{p-1}a_{p}(\E_{t}),$$
Nagao's conjecture is true for $\E$, i.e.,
$$\lim_{N \rightarrow \infty}\frac{1}{N}\sum_{p\leq N}-A_{p}(\E)\log p=\rank \E(\Q(T)).$$
\end{Proposition}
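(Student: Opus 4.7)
My plan is to exploit the quadratic twist structure of the fibration. Each smooth fiber $\E_t$ is the quadratic twist of $E : y^2=x^3+x$ by $f(t) = t^3+t$, so for every prime $p \neq 2$ the standard twist relation gives
$$a_p(\E_t) = \left(\frac{f(t)}{p}\right) a_p(E)$$
whenever $f(t) \not\equiv 0 \pmod{p}$. With the convention $\left(\tfrac{0}{p}\right) = 0$ (which matches the dropping of singular fibers from the definition of $A_p(\E)$), summing over $t \in \FF_p$ yields the clean identity
$$p\, A_p(\E) \;=\; a_p(E) \sum_{t=0}^{p-1} \left(\frac{t^3+t}{p}\right).$$

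The next step is the self-referential observation: the cubic $t^3 + t$ inside the character sum is exactly the defining polynomial of $E$ itself. The classical point-counting formula $\#E(\FF_p) = p + 1 + \sum_t \left(\tfrac{t^3+t}{p}\right)$ therefore gives $\sum_{t=0}^{p-1} \left(\tfrac{t^3+t}{p}\right) = -a_p(E)$, and substituting produces the decisive identity
$$-A_p(\E) \;=\; \frac{a_p(E)^2}{p}.$$

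I would then invoke Lemma 2.3 with the bounded sequence $s_p = a_p(E)^2/p$ (bounded by $4$ by Hasse's estimate); it suffices to compute $\lim_N \frac{1}{\pi(N)} \sum_{p \le N} a_p(E)^2/p$. Since $E$ has CM by $\Z[i]$ with CM field $\Q(i) \not\subset \Q$, the (proven) Sato--Tate law for $E$ says that the angles $\theta_p$ are equidistributed on $[0,\pi]$ with respect to $\mu = \tfrac{1}{2}\delta_{\pi/2} + \tfrac{1}{2\pi}\, d\theta$. Applied to the continuous test function $g(\theta) = 4\cos^2\theta = a_p(E)^2/p$,
$$\lim_{N \to \infty} \frac{1}{\pi(N)} \sum_{p \le N} \frac{a_p(E)^2}{p} \;=\; \int_0^\pi 4\cos^2\theta\, d\mu(\theta) \;=\; \frac{1}{2\pi}\int_0^\pi 4\cos^2\theta\, d\theta \;=\; 1.$$

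Combining these steps, Nagao's limit equals $1$, matching $\rank\E(\Q(T)) = 1$ from Theorem 1.2 (the case $F = \Q(i) \not\subset L = \Q$). There is really no analytic obstacle here: the force of the argument is the purely algebraic ``closing of the loop'' in which the character sum returns $-a_p(E)$ because $f$ and the cubic defining $E$ coincide, thereby reducing a second-moment question over a twist family to a first-moment Sato--Tate integral of $4\cos^2\theta$ against the CM measure.
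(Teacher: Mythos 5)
Your proposal is correct, and the analytic half is exactly the paper's argument: the twist relation $a_p(\E_t)=\left(\frac{f(t)}{p}\right)\f_p$, the ``closing of the loop'' identity $\sum_t\left(\frac{t^3+t}{p}\right)=-\f_p$ giving $-A_p(\E)=\f_p^2/p$, the second-moment computation $\int 4\cos^2\theta\,d\mu=1$ against the CM measure (you are in fact slightly more careful than the paper in writing the measure as $\tfrac12\delta_{\pi/2}+\tfrac1{2\pi}d\theta$ and noting the atom contributes $0$), and the passage from the $\frac{1}{\pi(N)}$-average to the $\frac1N\sum\log p$ form via the lemma (Lemma 2.4 in the paper's numbering, not 2.3). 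Where you diverge is the rank verification: the paper does not cite Theorem 1.2 here but gives a self-contained, Nagao-style computation, describing $E(\bar\Q(T,S))$ explicitly via the CM action of $\Z[\sqrt{-1}]$ (every morphism $E\to E$ is a translation composed with an isogeny), transporting this to $\E$ by the twisting isomorphism, and then taking invariants under the Galois involution $S\mapsto -S$ to exhibit the full group of $\bar\Q(T)$-points as $\Z\cdot P\oplus$ torsion, hence rank $1$. Your appeal to Theorem 1.2 (i.e.\ Theorem 2.6) is not circular, since that theorem is proved later via Proposition 2.8 and the theorem on fields of definition of CM endomorphisms, independently of this proposition; but it trades the paper's elementary, explicit determination of the Mordell--Weil group for the heavier N\'eron-model/Jacobian-homomorphism machinery, and it inverts the paper's expository order, in which this proposition is the hands-on special case motivating the general twist formula rather than a corollary of it.
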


\begin{proof} 
We can express each fibral Frobenius trace using the Legendre symbol
$$a_{p}(\E_{t})=\left(\frac{f(t)}{p}\right) \f_{p}$$
and
$$p A_{p}({\E})=\sum_{t=0}^{p-1}a_{p}(\E_{t})=\sum_{t=0}^{p-1}\left(\frac{\f(t)}{p}\right)\f_{p}=-\f_{p}^{2},$$
where $\f_{p}$ is the trace of Frobenius of the elliptic curve $E$ for each prime $p$.

From Conjecture \ref{ST}, we know the distribution of the trace of Frobenius is equidistributed with respect to the uniform measure. Thus   
$$\lim_{N \rightarrow \infty}\frac{1}{\pi(N)}\sum_{p\leq N}-A_{p}(\E)=\lim_{N \rightarrow \infty}\frac{1}{\pi(N)}\sum_{p \leq N}\frac{\f_{p}^2}{p}=\int_{0}^{\pi}4\cos^2\theta\frac{1}{2\pi}d\theta=1.$$
Thus Lemma \ref{lem1} implies
$$\lim_{N \rightarrow \infty}\frac{1}{N}\sum_{p\leq N}-A_{p}(\E)\log p=\lim_{N \rightarrow \infty}\frac{1}{\pi(N)}\sum_{p\leq N}-A_{p}(\E)=1.$$
For verifying Nagao's conjecture, we have to show the rank of $\E(\mathbb{Q}(T))$ is $1$. This proof is based on \cite{NAG}.
Denote $K'=\bar{\mathbb{Q}}(T,S)$, where $T$ and $S$ are variables satisfying $S^2=T^3+T$ which is an elliptic curve having complex multiplication by $\mathbb{Z}[\sqrt{-1}]$. Note that an element $r+s\sqrt{-1} \in \mathbb{Z}[\sqrt{-1}]$ acts on the point $(X,Y)$ as :
$$(X,Y) \longmapsto r\cdot(X,Y)+s\cdot(-X,\sqrt{-1}\cdot Y).$$
We consider the elliptic curve $E : y^2=x^3+x$ defined over $K'=\bar{\mathbb{Q}}(T,S)$ and $C:S^2=T^3+T$. We can rewrite
$$E(K')=\{(X=X(T,S),Y=Y(T,S)) : Y^2=X^3+X\}$$ 
and each point in $E(K')$ define a map
$$(X(T,S),Y(T,S)):C\longrightarrow E.$$
In other words, $K'$-rational points of $E$ are expressed by
\begin{align*}
E(K') & =  \{Maps~~C \rightarrow E\}\\
               & =  \{Maps ~~E \rightarrow E\}\\
							&	=  \{r\cdot(T,S)+s\cdot(-T,\sqrt{-1}\cdot S)+(x_{0},y_{0}) : r,s\in \mathbb{Z},(x_{0},y_{0})\in E(\bar{\mathbb{Q}})\},
\end{align*}
since all morphisms from $E$ to $E$ are the composition of a translation map and an isogeny. The surface $\E$ also can be written by $Y^2=x^3+f(T)^2X$ and thus is $K$-isomorphic to $E$ by the map
$$\phi :(X,Y) \longmapsto (f(T)X, f(T)^{3/2}Y)=(S^2 X, S^3 Y).$$ Define
$$P=\phi(T,S)=(S^2 T, S^4)$$
$$Q=\phi(-T,\sqrt{-1}\cdot S)=(-S^2 T, \sqrt{-1}\cdot S^4).$$
Then all $K'$-rational points of ${\E}$ are given by
$$\{r\cdot P +s \cdot Q +\phi(x_{0},y_{0}) : r,s\in\mathbb{Z}, (x_{0},y_{0})\in E(\bar{\mathbb{Q}})\}.$$ 
Let $\sigma$ be an element of $Gal(K/\bar{\mathbb{Q}}(T))$ such that $S^{\sigma}=-S$. Then we have $P^{\sigma}=P$, $Q^{\sigma}=Q$, and
$$y_{0}=0~~\Leftrightarrow~~\phi(x_{0},y_{0})^{\sigma}=\phi(x_{0},y_{0}).$$
Thus we can write $\bar{\mathbb{Q}}(T)$-rational points of ${\E}$ by
$$\{r \cdot P+s\cdot Q +n_{0}(0,0)+n_{1}(T^2,0)+n_{2}(-T^{2},0) : r,s \in \mathbb{Z}, n_{0},n_{1},n_{2}\in \mathbb{Z}/2{\mathbb{Z}}\}.$$
Therefore we can conclude the rank of $\E(\mathbb{Q}(T))$ is 1.

\end{proof}

Note that we can also observe $\rank\E(\mathbb{Q}(i)(T))$ is $2$ from the above proof. Now we can pose a natural question. How the average of $A_{p}(\E)$ changes if we consider splitting of primes in the ring of integers of $\mathbb{Q}(i)$? The following generalization of Proposition \ref{Prop1} can answer the question.

\begin{Theorem}
\label{thm10}
Let $E$ be an elliptic curve over $k$ with complex multiplication by $F=\mathbb{Q}(\sqrt{-d}),$ i.e., $\End_{\bar{k}}(E)\otimes\mathbb{Q}=F$. Define a (non-split) surface $\E^{f}$ which is a nontrivial quadratic twist of $E: S^2=f(T)=T^3+aT+b$,
$$\E^{f} : y^2=x^3+f(T)^{2}ax+f(T)^{3}b,$$
where $a,b\in k.$ Then for a field $L$ over $k$,\\
(i) 
 $$\rank \E^{f}(L(T)) = \left\{ \begin{array}{ll} 
 1 & \textrm{if $F \not\subset L$}\\
 2 & \textrm{if $F \subset L$.} \end{array} \right. 
$$
(ii) Nagao's conjecture is true for $\E^{f}$.
\end{Theorem}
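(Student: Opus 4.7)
The plan is to generalize the computation in Proposition \ref{Prop1}: evaluate $A_{\mathfrak{p}}(\mathcal{E}^{f})$ explicitly in terms of $a_{\mathfrak{p}}(E)$ via the quadratic-twist identity, compute the resulting Nagao limit using the (now unconditional) CM case of the Sato--Tate conjecture over $L$, and independently determine the rank of $\mathcal{E}^{f}(L(T))$ by a Galois-descent argument on the double cover $C:S^{2}=f(T)$.

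For a prime $\mathfrak{p}$ of $L$ of good reduction and $t\in\mathbb{F}_{\mathfrak{p}}$ with $f(t)\neq 0$, the fiber $\mathcal{E}^{f}_{t}$ is the quadratic twist of $E$ by $f(t)$, whence $a_{\mathfrak{p}}(\mathcal{E}^{f}_{t})=\bigl(\tfrac{f(t)}{\mathfrak{p}}\bigr)\,a_{\mathfrak{p}}(E)$ for the quadratic character on $\mathbb{F}_{\mathfrak{p}}^{\times}$. Because $E$ is $L$-isomorphic to the curve $S^{2}=f(T)$, the standard point-count gives $\sum_{t\in\mathbb{F}_{\mathfrak{p}}}\bigl(\tfrac{f(t)}{\mathfrak{p}}\bigr)=-a_{\mathfrak{p}}(E)$, so
$$q_{\mathfrak{p}}A_{\mathfrak{p}}(\mathcal{E}^{f})=-a_{\mathfrak{p}}(E)^{2},\qquad -A_{\mathfrak{p}}(\mathcal{E}^{f})=\frac{a_{\mathfrak{p}}(E)^{2}}{q_{\mathfrak{p}}}=4\cos^{2}\theta_{\mathfrak{p}}.$$
By Lemma \ref{lem1}, verifying Nagao's conjecture reduces to computing $\lim_{N\to\infty}\tfrac{1}{\pi_{L}(N)}\sum_{q_{\mathfrak{p}}\leq N}4\cos^{2}\theta_{\mathfrak{p}}$ against the Sato--Tate distribution of $E$ viewed over $L$. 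By the (proved) CM Sato--Tate law recalled in the remark after Conjecture \ref{ST}, this distribution is $\tfrac{1}{\pi}d\theta$ when $F\subset L$ and $\tfrac{1}{2}\delta_{\pi/2}+\tfrac{1}{2\pi}d\theta$ when $F\not\subset L$; direct integration yields second moment $2$ in the first case and $1$ in the second.

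For the rank, I would mimic the descent step of Proposition \ref{Prop1}. Let $C$ be the elliptic curve $S^{2}=f(T)$ with origin at the Weierstrass point at infinity, so that $C\cong E$ over $L$. Over $L(C)=L(T,S)$ the map $\phi:\mathcal{E}^{f}\to E$, $\phi(x,y)=(x/S^{2},\,y/S^{3})$, is an isomorphism satisfying $\phi^{\sigma}=[-1]_{E}\circ\phi$ for the Galois involution $\sigma:S\mapsto -S$, so
$$\mathcal{E}^{f}(L(T))\;\cong\;\{P\in E(L(C)):P^{\sigma}=-P\}.$$
Using the decomposition $E(L(C))\cong\Hom_{L}(C,E)\oplus E(L)$ together with the fact that $\sigma$ acts on $C$ as $[-1]_{C}$, hence by $-1$ on $\Hom_{L}(C,E)$ and trivially on the constant summand, the condition $P^{\sigma}=-P$ cuts out $\Hom_{L}(C,E)\oplus E[2](L)$. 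Since $\Hom_{L}(C,E)=\End_{L}(E)$ has $\mathbb{Z}$-rank $2$ exactly when $F\subset L$ and rank $1$ otherwise (the CM endomorphisms of $E$ being defined over the compositum $kF$), this proves (i), and since the two numbers match the Nagao limits computed above, (ii) follows.

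The step requiring the most care is the twist/Galois bookkeeping: one must verify that the quadratic-twist cocycle of $\mathcal{E}^{f}$ as a form of $E$ over $L(C)/L(T)$ really induces the condition $P^{\sigma}=-P$ (with the correct sign), and that under the identification $C\cong E$ the hyperelliptic involution on $C$ coincides with $[-1]_{C}$. Once these identifications are in place, the rest is a formal consequence of the explicit formula $-A_{\mathfrak{p}}(\mathcal{E}^{f})=4\cos^{2}\theta_{\mathfrak{p}}$ together with the (unconditional) CM Sato--Tate distributions for $E/L$.
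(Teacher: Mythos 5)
Your proposal is correct, and its analytic half is exactly the paper's argument: the twist identity $a_{\p}(\E^{f}_{t})=\chi_{\p}(f(t))\f_{\p}$, the point count $\sum_{t}\chi_{\p}(f(t))=-\f_{\p}$ giving $-A_{\p}(\E^{f})=\f_{\p}^{2}/q_{\p}$, Lemma \ref{lem1} to pass between the weighted and unweighted averages, and the (unconditional) CM Sato--Tate laws yielding second moment $2$ when $F\subset L$ and $1$ when $F\not\subset L$.

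Where you diverge is in part (i). The paper obtains $\rank \E^{f}(L(T))=\rank \End_{L}(E)$ from its Proposition \ref{prop2.3}, i.e.\ the N\'eron-model/Rosen--Silverman--Petersen machinery identifying sections of the twisted family with the minus eigenspace of $\Mor_{L}(C',J_X)$ and hence with $\Hom_{L}(J',J_X)$ modulo $\Hom_{L}(J,J_X)$, specialized to $C'=E$, $C=\PPP^{1}$. You instead run a direct Galois descent over the quadratic extension $L(T,S)/L(T)$: identifying $\E^{f}(L(T))$ with $\{P\in E(L(C)):P^{\sigma}=-P\}$, decomposing $E(L(C))\cong\Hom_{L}(C,E)\oplus E(L)$, and noting that $\sigma$ acts as $-1$ on the homomorphism part (the hyperelliptic involution on $C$ is $[-1]_{C}$ for the origin at infinity) and trivially on constants, so the antiinvariants are $\Hom_{L}(C,E)\oplus E[2](L)$ and the rank is $\rank\End_{L}(E)$. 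This is essentially the explicit argument the paper uses only in the special case of Proposition \ref{Prop1} (following Nagao), promoted to the general statement; it is more elementary, makes the torsion visible, and avoids Proposition \ref{prop2.3}, whereas the paper's route buys a statement general enough to reuse for the genus-$2$ twists later on. Both routes then finish with the same input, Silverman's theorem that all endomorphisms of a CM curve are defined over the compositum with the CM field, so $\rank\End_{L}(E)$ is $2$ or $1$ according as $F\subset L$ or not, matching the computed limits. Your flagged verification points (the sign in the twist cocycle and that the hyperelliptic involution is $[-1]_{C}$) are indeed the only places needing care, and both check out.
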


We need some observations for the proof. Let $\mathscr{H}$ be a surface which is a non-trivial quadratic twist of a fixed hyperelliptic curve $X/k$: There is a double cover $C'\rightarrow C$ and the associated involution $i:C'\rightarrow C'$, then $\mathscr{H}$ is the minimal surface birational to
$$\frac{X\times_{k}C'}{((z,x)\sim(-z,ix))},$$
where $-z$ denotes the involution on $X$ to $z$. We can also describe $\mathscr{H}$ more explicitly as follows. Fix a hyperelliptic curve $X/k$ and let $f(X)\in k[X]$ with distinct roots. Also, let
$$X: Y^{2}=f(X).$$
Then there is a function $g\in k(C)^{*}$ such that $\mathscr{H}$ is a regular proper model for the surface
$$gY^{2}=f(X).$$
We generalize \cite[Proposition 2.3]{RS} for the above surface $\mathscr{H}$. Denote $K=k(C)$ by the function field of $C$ defined over $k$. Let $\mathcal{N}_{\mathscr{H}}$ be the N\'eron model of $\mathscr{H}$ considered as a curve over $K$. Without loss of generality, we assume $C$ is affine. To begin with, we recall the N\'eron mapping property.

\begin{Definition}{(N\'eron mapping property)}
Let $R$ be a Dedekind domain with fractional field $k$, and let $A/k$ be an abelian variety. A N\'eron model $\mathcal{A}$ of $A$ satisfies the following universal property:
\begin{flushleft} 
\begin{verse}
\noindent
Let $\mathcal{X}/R$ be a smooth $R$-scheme with generic fiber $X/k$. For each rational map $\phi_{K}:X_{/k}\rightarrow A_{/k}$ defined over $k$, there exists a unique $R$-morphism $\phi_{R}:\mathcal{X}_{/R}\rightarrow \mathcal{A}_{/R}$ extending $\phi_{k}$.
\end{verse} 
\end{flushleft} 
\end{Definition}

\begin{Proposition}
\label{prop2.3}
Let $\mathscr{H}$ be the twist of $X/k$ using the double cover $C'\rightarrow C$. Denote $J$ and $J'$ by the Jacobian varieties of $C$ and $C'$ respectively. Also, let $J_{X}$ be the Jacobian variety of $X$. We show the following statements.
\begin{enumerate}[(a)]
\item $$\rank \Hom_{k}(J,J_{X})=\rank \Mor_{k}(C,J_{X})/J_X(k)=\rank \NS(J_{X}\times C/k)-2.$$\\
\item Let $i:C'\rightarrow C'$ be the involution associated to the map $C'\rightarrow C'$. Let the eigenspaces
$$\Mor_{k}(C',J_{X})^{\pm}=\{\phi\in\Mor_{k}(C',J_{X}):\phi\circ i=\pm\phi\}.$$
Then we can describe each eigenspace explicitly: Denote $\mathcal{N}_{\mathscr{H}}(C/k)$ its group of sections. 
$$\Mor_{k}(C',J_{X})^{+}\cong\Mor_{k}(C,J_{X})~~~\text{and}~~~\Mor_{k}(C',J_{X})^{-}\cong \mathcal{N}_{\mathscr{H}}(C/k).$$
\item
\begin{align*}
\rank\mathcal{N}_{\mathscr{H}}(C/k) & = \rank \NS(J_{X}\times C'/k)-\rank \NS(J_{X}\times C/k)\\
&= \rank \Hom_{k}(J',J_{X})-\rank \Hom_{k}(J,J_{X}).
\end{align*}
\end{enumerate}
\end{Proposition}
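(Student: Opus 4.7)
The plan is to follow the strategy of Rosen--Silverman (their Proposition 2.3) and extend it from the elliptic-curve setting to the Jacobian $J_X$ of the hyperelliptic curve $X$, treating the three parts in order.

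For part (a), the first identification $\Hom_k(J,J_X)\cong \Mor_k(C,J_X)/J_X(k)$ is a direct consequence of the universal property of the Jacobian: once a basepoint $P_0\in C(k)$ is fixed, every $\phi\in\Mor_k(C,J_X)$ factors uniquely as $\phi=\psi\circ\iota_{P_0}+c$, where $\iota_{P_0}\co C\to J$ is the Abel--Jacobi embedding, $\psi\in\Hom_k(J,J_X)$, and $c=\phi(P_0)\in J_X(k)$; quotienting out the constant sections leaves exactly $\Hom_k(J,J_X)$. For the second equality, I would invoke the classical decomposition
$$\NS(J_X\times C/k)\;\cong\;\NS(J_X/k)\oplus\NS(C/k)\oplus\Hom_k(J,J_X),$$
take ranks, and absorb $\rank\NS(C/k)=1$ together with $\rank\NS(J_X/k)$ (normalized to $1$, as in the Rosen--Silverman setting) into the $-2$.

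Part (b) handles each eigenspace separately. For the $+$-eigenspace I would use \'etale descent along the double cover $C'\to C$: a morphism $\phi\in\Mor_k(C',J_X)$ with $\phi\circ i=\phi$ factors through $C'/\langle i\rangle\cong C$, and pullback along $C'\to C$ gives the inverse, yielding $\Mor_k(C',J_X)^+\cong\Mor_k(C,J_X)$. For the $-$-eigenspace I would exploit the explicit construction of $\mathscr{H}$ as the quotient of $X\times_k C'$ by the diagonal involution $(z,x)\mapsto(-z,ix)$, in which the hyperelliptic involution $z\mapsto -z$ on $X$ acts as $[-1]$ on $J_X$: a section $s\in\mathcal{N}_{\mathscr{H}}(C/k)$ lifts, via the N\'eron mapping property and the base change $C'\to C$, to a morphism $\tilde{s}\co C'\to X$, and postcomposing with the Abel--Jacobi embedding $X\hookrightarrow J_X$ produces an element of $\Mor_k(C',J_X)^-$; conversely, an anti-invariant morphism $C'\to J_X$ descends under the identification $(z,x)\sim(-z,ix)$ to a section of $\mathscr{H}\to C$.

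For part (c), since $i$ generates a $\mathbb{Z}/2\mathbb{Z}$-action and $2$ is invertible after $\otimes\mathbb{Q}$, we obtain the eigenspace decomposition $\Mor_k(C',J_X)\otimes\mathbb{Q}\cong\Mor_k(C',J_X)^+\otimes\mathbb{Q}\,\oplus\,\Mor_k(C',J_X)^-\otimes\mathbb{Q}$. Taking ranks and substituting (b) gives $\rank\Mor_k(C',J_X)=\rank\Mor_k(C,J_X)+\rank\mathcal{N}_{\mathscr{H}}(C/k)$; dividing by the common contribution of $J_X(k)$ on both sides and applying (a) to both $C'$ and $C$ yields the two claimed difference formulas for $\rank\mathcal{N}_{\mathscr{H}}(C/k)$. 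The main obstacle is the identification of $\Mor_k(C',J_X)^-$ with $\mathcal{N}_{\mathscr{H}}(C/k)$ in part (b): it requires carefully matching the gluing datum defining the twist with the $[-1]$-action on $J_X$, and invoking the N\'eron mapping property to pass between the birational description of $\mathscr{H}$ and sections of its regular model over $C$.
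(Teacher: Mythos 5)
Your part (a) (first equality via the universal property of the Jacobian, second via the decomposition of $\NS(J_X\times C)$) and your part (c) (eigenspace splitting after tensoring with $\Q$) are in substance the paper's argument: the paper cites Petersen's lemma for the first equality, Rosen--Silverman for the second, and proves the splitting up to $2$-torsion with the maps $\phi\mapsto(\phi+\phi\circ i,\phi-\phi\circ i)$, which is what your $\otimes\,\Q$ argument amounts to. The genuine problem is your treatment of the minus eigenspace in (b). Here $\mathcal{N}_{\mathscr{H}}$ is the N\'eron model of (the Jacobian of) the generic fiber of $\mathscr{H}\to C$, so an element of $\mathcal{N}_{\mathscr{H}}(C/k)$ is a $K$-rational point of that Jacobian, $K=k(C)$. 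For genus~$2$ fibers --- the case this proposition is actually needed for --- such a point is in general not represented by a point of the curve, so your map ``section $\mapsto$ lift to a morphism $C'\to X$ $\mapsto$ compose with Abel--Jacobi'' is not defined on all of $\mathcal{N}_{\mathscr{H}}(C/k)$: only the (non-group) image of curve points would be reached. Symmetrically, an anti-invariant morphism $C'\to J_X$ descends to a section of the quotient of $J_X\times C'$ by $(P,x)\sim(-P,ix)$, i.e.\ of the family of twisted Jacobians, not to a section of the curve fibration $\mathscr{H}\to C$. The intended argument (Rosen--Silverman, Petersen) stays on the Jacobian side throughout: $\Mor_k(C',J_X)\cong J_X(K')$ with $K'=k(C')$, precomposition with $i$ realizes the action of $\Gal(K'/K)$, so $\Mor_k(C',J_X)^-=\{P\in J_X(K'):\sigma P=-P\}$ is exactly the group of $K$-points of the quadratic twist of $J_X$ by $K'/K$, which is the Jacobian of the generic fiber of $\mathscr{H}$; the N\'eron mapping property then turns these $K$-points into sections of $\mathcal{N}_{\mathscr{H}}$. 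In the elliptic case $X=J_X$ your route collapses to this one, but for higher genus the detour through points of $X$ is precisely what must be avoided, and it is the whole content of (b).

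A secondary caveat in (a): you ``normalize'' $\rank\NS(J_X/k)$ to $1$. For an elliptic curve this is automatic, but for an abelian surface it is a real hypothesis (the rank can be as large as $4$), and the decomposition gives $\rank\NS(J_X\times C/k)=\rank\NS(J_X/k)+1+\rank\Hom_k(J,J_X)$, so the ``$-2$'' requires $\rank\NS(J_X/k)=1$. This does not damage part (c), where the $\NS(J_X)$ and curve contributions cancel in the difference --- which is all that is used later --- but as written your justification of the second equality in (a) is an assumption rather than a proof.
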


\begin{proof}
For (a), the first equality follows from \cite[Lemma 3.3]{PET}. Also, the proof of \cite[Proposition 2.3(a)]{RS} already implies the second equality. The proof of (b) is also analogous to  \cite[Proposition 2.3(b)]{RS}. A map $\phi:C'\rightarrow J_{X}$ descends to a map $C\rightarrow J_{X}$ if and only if it is constant on each fiber of $C'\rightarrow C$, which is equivalent to $\phi\circ i=\phi$. Thus, $\Mor_{k}(C',J_{X})^{+}\cong\Mor_{k}(C,J_{X})$.
Also, a map $\phi:C'\rightarrow J_{X}$ will descend to a map $C\rightarrow \mathscr{H}$ if and only if $\phi$ respects the equivalence $(z,x)\sim (-z,ix)$, and if and only if $\phi\circ i=-\phi$. On the other hand, the N\'eron mapping property tells the map $C\rightarrow\mathscr{H}$ extends uniquely to a section $\Spec(K[C])\rightarrow \mathcal{N}_{\mathscr{H}}$. Thus, the second isomorphism of (b) holds. Consider the following maps
\begin{align*}
F&:\Mor_{k}(C',J_{X})^{+}\times\Mor_{k}(C',J_{X})^{-}\rightarrow\Mor_{k}(C',J_{X}), ~~~F(\phi_{1},\phi_{2})=\phi_{1}+\phi_{2},\\
G&:\Mor_{k}(C',J_{X})\rightarrow\Mor_{k}(C',J_{X})^{+}\times\Mor_{k}(C',J_{X})^{-},~~~G(\phi)=(\phi+i(\phi),\phi-i(\phi)),
\end{align*}
where $i$ is the natural involution on $\Mor_{k}(C',J_{X})$ induced from the involution on $C'$. Then we have
$$F(G(\phi))=2\phi,~~~\text{and}~~~G(F(\phi_{1},\phi_{2}))=2(\phi_{1},\phi_{2}).$$ 
Thus, $\Mor_{k}(C',J_{X})$ is isomorphic to the direct sum of $\Mor_{k}(C',J_{X})^{+}$ and $\Mor_{k}(C',J_{K})^{-}$ up to $2$-torsion. Therefore, (c) follows from (a) and (b). 
\end{proof}

Thus, by the construction of the N\'eron model, we have the following isomorphism
\begin{align*}
\mathcal{N}_{\mathscr{H}}(C/K) &= \{\text{sections}~\Spec(K[C])\rightarrow \mathcal{N}_{\mathscr{H}}\}\\
&=\mathcal{N}_{\mathscr{H}}(K[C])\\
&\cong J_{X}(K).
\end{align*}
Therefore, we have the following equality of ranks:
\begin{equation}
\rank J_{X}(K)=\rank \Hom_{k}(J',J_{X})-\rank \Hom_{k}(J,J_{X}).
\end{equation}
Taking $C'=X$ and $C=\mathbb{P}^{1}$, this implies
\begin{equation}
\rank J_{X}(K)=\rank \End_{k}(J_{X}).
\end{equation}

We resume the proof of Theorem \ref{thm10}.

\begin{proof}[Proof of Theorem \ref{thm10}]
Given a prime ideal $\mathfrak{p}$ of $k$, denote the trace of Frobenius of $E$ by $\f_{\mathfrak{p}}$. Since we are working on the quadratic twist of $E$, for fixed $T=t$, we have
$$a_{\mathfrak{p}}(\E^{f}_{t})=\chi_{\mathfrak{p}}(f(t))\cdot\f_{\mathfrak{p}},$$
where $\chi_{\mathfrak{p}}:\mathbb{F}_{\mathfrak{p}}^{*}\rightarrow\{1,-1\}$ is the unique nontrivial character of order $2$. Naturally, define $\chi_{\mathfrak{p}}(0)=0$, and
\begin{align*}
\f_{\p} & =q_{\p}+1-\#E(\FF_{\p})\\
&=q_{\p}+1-\left(1+\sum_{t\in\FF_{\p}}1+\chi_{\p}(f(t))\right)\\
&=-\sum_{t\in\FF_{\p}}\chi_{\p}(f(t)),
\end{align*}
which shows
$$A_{\mathfrak{p}}(\E^{f})=\frac{1}{q_{\mathfrak{p}}}\sum_{t\in\mathbb{F}_{\mathfrak{p}}}a_{\mathfrak{p}}(\E^{f}_{t})=\frac{1}{q_{\mathfrak{p}}}\sum_{t\in\mathbb{F}_{\mathfrak{p}}}\chi_{\mathfrak{p}}(f(t))\cdot\f_{\mathfrak{p}}=-\frac{\f^{2}_{\mathfrak{p}}}{q_{\mathfrak{p}}}.$$
Thus we get
$$\lim_{N \to \infty}\frac{1}{\pi(N)}\sum_{q_{\mathfrak{p}}\leq N}-A_{\mathfrak{p}}(\E^{f})=\lim_{N \to \infty}\frac{1}{\pi(N)}\sum_{q_{\mathfrak{p}} \leq N}\frac{\f^{2}_{\mathfrak{p}}}{q_{\mathfrak{p}}}.$$
Recall that if $E$ has complex multiplication defined over $L$, $\theta_{\mathfrak{p}}=\arccos\frac{a_{\mathfrak{p}}}{2\sqrt{q_{\mathfrak{p}}}}$ is uniformly distributed on $[0,\pi]$. On the other hand, if $E$ has complex multiplication not defined over $L$, $\theta_{\mathfrak{p}}=\arccos\frac{a_{\mathfrak{p}}}{2\sqrt{q_{\mathfrak{p}}}}$ is half uniformly distributed on $[0,\pi]$ and takes discrete measure of mass $\frac{1}{2}$ at $\frac{\pi}{2}$. Thus, if $F\subset L$, we can do the calculation
$$\lim_{N \to \infty}\frac{1}{\pi(N)}\sum_{q_{\mathfrak{p}}\leq N}-A_{\mathfrak{p}}(\E^{f})=\lim_{N \to \infty}\frac{1}{\pi(N)}\sum_{q_{\mathfrak{p}} \leq N}\frac{\f^{2}_{\mathfrak{p}}}{q_{\mathfrak{p}}}=\int^{\pi}_{0}4\cos^{2}\theta \frac{1}{\pi}d\theta=2.$$
Similarly we get $1$ for the case $F\not\subset L$. To finish the proof, we need to show $\rank \E^{f}(L(T))$ agrees to $2$ and $1$ in each case. From Proposition \ref{prop2.3}, we have 
$$\rank \E^{f}(L(T))=\rank \End_{L}(E).$$
Also we need the following theorem from \cite[II.2.2.]{SIL2}.

\begin{Theorem}
Let $E$ be an elliptic curve defined over a field $L \subset \mathbb{C}$ and with complex multiplication by the quadratic imaginary field $K \subset \mathbb{C}$. Then every endomorphism of $E$ is defined over the compositum $LK$.
\end{Theorem}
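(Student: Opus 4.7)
The plan is to show that every endomorphism of $E$ is completely determined by its action on an invariant differential, and that this action is Galois-equivariant with values in $K$. First I would fix a nonzero invariant differential $\omega$ on $E$ defined over $L$ (which exists since $E/L$ is smooth of dimension one). For any $\phi\in\End_{\bar{L}}(E)$, the pullback $\phi^{*}\omega$ is again translation invariant, so $\phi^{*}\omega=c_{\phi}\cdot\omega$ for a unique scalar $c_{\phi}\in\bar{L}$. The assignment $\phi\mapsto c_{\phi}$ is a ring homomorphism $\End_{\bar{L}}(E)\to\bar{L}$, and in characteristic zero it is injective because a nonzero isogeny of an elliptic curve has nonzero differential. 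Tensoring with $\mathbb{Q}$ extends it to an embedding $\iota\colon K\cong\End_{\bar{L}}(E)\otimes\mathbb{Q}\hookrightarrow\bar{L}$. The image $\iota(K)$ is a subfield of $\bar{L}$ abstractly isomorphic to $K$; because the two complex embeddings of the quadratic imaginary field $K$ differ by complex conjugation and therefore share the same image set, $\iota(K)$ coincides inside $\bar{L}$ with the fixed subfield $K\subseteq\bar{L}$, and in particular $\iota(K)\subseteq LK$.

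The second step is to track Galois. For any $\sigma\in\Gal(\bar{L}/L)$, applying $\sigma$ to the $L$-rational equation $\phi^{*}\omega=c_{\phi}\,\omega$ gives $(\phi^{\sigma})^{*}\omega=\sigma(c_{\phi})\,\omega$, so $c_{\phi^{\sigma}}=\sigma(c_{\phi})$. Since $\phi\mapsto c_{\phi}$ is injective, this implies $\phi^{\sigma}=\phi$ if and only if $\sigma$ fixes the scalar $c_{\phi}$. Now take any $\sigma\in\Gal(\bar{L}/LK)$; since $c_{\phi}\in\iota(K)\subseteq LK$, we have $\sigma(c_{\phi})=c_{\phi}$, hence $\phi^{\sigma}=\phi$. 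As this holds for every $\sigma\in\Gal(\bar{L}/LK)$ and every $\phi\in\End_{\bar{L}}(E)$, each endomorphism of $E$ is defined over $LK$, which is the claim.

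The main point requiring care is the identification of $\iota(K)$ with the ambient copy of $K$ inside $\bar{L}$: a priori $\iota$ is some embedding of the abstract field $K$ into $\bar{L}$, and there are two such embeddings, distinguished by whether the CM action on the tangent space matches the chosen embedding $K\hookrightarrow\mathbb{C}$ or its complex conjugate. The reason this subtlety is harmless is exactly that $K$ is stable under complex conjugation, so the two embeddings produce the same image set inside $\bar{L}$; once this observation is secured, the rest is immediate from the characteristic-zero rigidity of elliptic-curve isogenies together with the Galois invariance of the $L$-rational differential $\omega$.
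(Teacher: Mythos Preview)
Your argument is correct; it is in fact the standard proof of this result. However, the paper itself does not prove this theorem at all: it merely quotes it from Silverman's \emph{Advanced Topics in the Arithmetic of Elliptic Curves} \cite[II.2.2]{SIL2} and invokes it as a black box inside the proof of Theorem~\ref{thm10} to identify $\rank\End_{L}(E)$. So there is no ``paper's own proof'' to compare against---you have supplied exactly the argument that the cited reference gives, namely that the action $\phi\mapsto c_{\phi}$ on a fixed $L$-rational invariant differential is an injective, Galois-equivariant ring embedding of $\End_{\bar L}(E)$ into $\bar L$ whose image lies in $K$, whence every $\sigma\in\Gal(\bar L/LK)$ fixes each endomorphism.
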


Thus if $F \subset L$ where $F$ is the complex multiplication field of $E$, we have $\rank E^{f}(L(T))=2$, and $1$ otherwise. Thus we showed Nagao's conjecture holds for the surface $\E^{f}$.
\end{proof}

\section{Nagao's conjecture and the Sato-Tate conjecture for abelian surfaces}

We can generalize Nagao's conjecture for elliptic surfaces to smooth projective surfaces over a number field. The formulation of Nagao's conjecture for smooth projective surfaces follows \cite{HP}.

\subsection{Nagao's conjecture for smooth irreducible projective surfaces}
~\\
Let $k/\mathbb{Q}$ be a number field. Let $\mathcal{X}$ be a smooth irreducible projective surface over $k$ and $C$ be a smooth irreducible projective curve over $k$ which allows a proper flat morphism $f:\mathcal{X}\rightarrow C$ so that the fibers are curves of (arithmetic) genus $g\geq 1$. The assumption on $f:\mathcal{X} \rightarrow C$ implies the irreducibility and smoothness of the generic fiber. Denote the generic fiber of $f$ by $X/K$, where $K=k(C)$ is the function field of $C$. Also, denote $J(X)$ the Jacobian of $X$ and $(B,\tau)$ the $K/k$-trace of $J(X)$. The definition of $K/k$-trace follows S. Lang \cite[p.138]{LANG}.
\begin{Definition}
Let $k$ be a number field and $K$ be a function field defined over $k$. A {\it $K/k$-trace} of an abelian variety $A$ over $K$ is a pair $(B,\tau)$ with an abelian variety $B$ over $k$ and a homomorphism $\tau: B\rightarrow A$ over $K$ with the following universal mapping property: For an abelian variety $C$ over $k$ and a homomorphism $\varphi: C \rightarrow A$ defined over $K$, there exists a unique homomorphism $\bar{\varphi}: C\rightarrow B$ defined over $k$ which makes the following diagram commutes.
\begin{center}
\begin{tikzcd} 
C \arrow{r}{\varphi} \arrow{d}{\bar{\varphi}} 
&A \\ 
B \arrow{ur}{\tau}
\end{tikzcd}
\end{center}
\end{Definition}
We know $J(X)(K)/\tau(B(k))$ is finitely generated by the theorem of S. Lang and A. N\'eron \cite{LN}\cite{NER}.\\
\\
For a prime ideal $\mathfrak{p}$ in $k$, we can consider the reduction $f_{\mathfrak{p}}$ of $f$. Define a finite set of prime ideal $S$ which satisfies the following conditions: For all $\mathfrak{p}\notin S$, $\mathcal{X}$ and $C$ have good reduction and 
$$f_{\mathfrak{p}}:\mathcal{X}_{\mathfrak{p}}\rightarrow C_{\mathfrak{p}}$$
is proper and flat. Also, the fibers are curves with arithmetic genus $g$ over the residue field $\mathbb{F}_{\mathfrak{p}}$. Denote $\mathcal{X}_{\mathfrak{p},c}=f_{\mathfrak{p}}^{-1}(c)$, i.e., the fiber of $f_{\mathfrak{p}}$ at $c\in C_{\mathfrak{p}}$. Denote $\overline{\mathcal{X}}_{\mathfrak{p},c}=\mathcal{X}_{\mathfrak{p},c}\times_{k}\bar{k}$, where $\bar{k}$ is the algebraic closure of $k$. Define $G_{k}=\Gal(\bar{k}/k)$ be an absolute Galois group of $k$.\\
\\
Let $\Fr_{\mathfrak{p}}\in G_{k}$ be a Frobenius element and $I_{\mathfrak{p}}\subset G_{k}$ be the inertia group. Also, define the discriminant of $f$, $\Delta=\{c\in C:\mathcal{X}_{\mathfrak{p},c}~\text{is singular}\}$. By enlarging the set $S$, we can make the discriminant of $f_{\mathfrak{p}}$ is the same as the discriminant of $f$ modulo $\mathfrak{p}$ outside of $S$.\\
\\
Let $\overline{\Fr}_{\mathfrak{p}}$ be the Frobenius automorphism on $H^{1}_{\acute{e}t}(\overline{\mathcal{X}}_{\mathfrak{p},c},\mathbb{Q}_{l})$. Define the trace of Frobenius using cohomology
$$a_{\mathfrak{p}}(\mathcal{X}_{\mathfrak{p},c})=\Tr(\overline{\Fr}_{\mathfrak{p}}|H^{1}_{\acute{e}t}(\overline{\mathcal{X}}_{\mathfrak{p},c},\mathbb{Q}_{l})),$$
where we consider cohomology with proper support if $c\in \Delta_{\mathfrak{p}}(\mathbb{F}_{\mathfrak{p}})$. Also, define
$$a_{\mathfrak{p}}(B)=\Tr(\overline{\Fr}_{\mathfrak{p}}|H^{1}_{\acute{e}t}(\overline{B},\mathbb{Q}_{l})^{I_{\mathfrak{p}}}).$$
By enlarging the set $S$ if necessary, we can assume $B$ has good reduction for primes $\mathfrak{p}\notin S$, i.e., 
$$a_{\mathfrak{p}}(B)=\Tr(\overline{\Fr}_{\mathfrak{p}}|H^{1}_{\acute{e}t}(\overline{B},\mathbb{Q}_{l})).$$
Hindry and Pacheco \cite{HP} generalize Conjecture \ref{ST} for smooth irreducible projective surfaces as follows.

\begin{Conjecture}[Nagao's conjecture for smooth irreducible projective surfaces, \cite{HP}]
\label{HPconj}
Define the average trace of Frobenius
$$A_{\mathfrak{p}}(\mathcal{X})=\frac{1}{q_{\mathfrak{p}}}\sum_{c\in C_{\mathfrak{p}}(\mathbb{F}_{\mathfrak{p}})}a_{\mathfrak{p}}(\mathcal{X}_{\mathfrak{p},c}),$$
and
$$A_{\mathfrak{p}}^{*}(\mathcal{X})=A_{\mathfrak{p}}(\mathcal{X})-a_{\mathfrak{p}}(B).$$
Then the following equality holds
$$\lim_{N\to \infty}\frac{1}{N}\sum_{\substack{\mathfrak{p}\notin S \\ q_{\mathfrak{p}\leq X}}}-A_{\mathfrak{p}}^{*}(\mathcal{X})\log q_{\mathfrak{p}}=\rank(J(X)(K)/\tau B(k)).$$
\end{Conjecture}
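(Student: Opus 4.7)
The plan is to follow the Rosen--Silverman blueprint in its Hindry--Pacheco generalization, converting Nagao's sum into a statement about the order of a pole of an $L$-function, and then invoking the Tate conjecture together with a higher-genus Shioda--Tate formula to identify this pole order with $\rank(J(X)(K)/\tau B(k))$. The analytic input will be a Tauberian theorem, and the geometric input will be the decomposition of $H^{2}_{\text{\'et}}(\overline{\mathcal{X}},\mathbb{Q}_{\ell})$ via the Leray spectral sequence for $f:\mathcal{X}\to C$.

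First, I would attach to $\mathcal{X}/k$ the $L$-function
\[
L_{2}(\mathcal{X}/k,s)=\prod_{\mathfrak{p}\notin S}\det\!\bigl(I-\Frob_{\mathfrak{p}}q_{\mathfrak{p}}^{-s}\,\big|\,H^{2}_{\text{\'et}}(\overline{\mathcal{X}},\mathbb{Q}_{\ell})^{I_{\mathfrak{p}}}\bigr)^{-1},
\]
and decompose $H^{2}_{\text{\'et}}(\overline{\mathcal{X}},\mathbb{Q}_{\ell})$ using the Leray spectral sequence for $f$ into the three pieces $H^{0}(\overline{C},R^{2}f_{*}\mathbb{Q}_{\ell})$, $H^{1}(\overline{C},R^{1}f_{*}\mathbb{Q}_{\ell})$, and $H^{2}(\overline{C},R^{0}f_{*}\mathbb{Q}_{\ell})$. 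The outer two summands contribute only explicit, easily-analyzed Tate-twisted pieces (coming from $\mathbb{Q}_{\ell}(-1)$-type factors on the base), while the middle summand is the arithmetically interesting one. Applying the Grothendieck--Lefschetz trace formula fiberwise gives
\[
\sum_{c\in C_{\mathfrak{p}}(\mathbb{F}_{\mathfrak{p}})}a_{\mathfrak{p}}(\mathcal{X}_{\mathfrak{p},c})=\Tr\bigl(\overline{\Frob}_{\mathfrak{p}}\,\big|\,H^{\bullet}(\overline{C},R^{1}f_{*}\mathbb{Q}_{\ell})\bigr)+\text{(boundary/base terms)}.
\]
Subtracting $a_{\mathfrak{p}}(B)$ to form $A_{\mathfrak{p}}^{*}(\mathcal{X})$ removes exactly the contribution of the $K/k$-trace $(B,\tau)$, which sits inside $R^{1}f_{*}\mathbb{Q}_{\ell}$ as the maximal constant sub-local-system. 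Thus $-q_{\mathfrak{p}}\,A_{\mathfrak{p}}^{*}(\mathcal{X})$ is, up to lower order, minus the trace of $\Frob_{\mathfrak{p}}$ on the "non-constant" part of $H^{1}(\overline{C},R^{1}f_{*}\mathbb{Q}_{\ell})$, which one recognizes as a shifted Euler factor of $L_{2}(\mathcal{X}/k,s)$.

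Second, I would pass from the pointwise traces to the logarithmic derivative of the $L$-function. Expanding $-\frac{L_{2}'}{L_{2}}(s)$ at $s=2$ and matching Dirichlet coefficients, the sum
\[
\sum_{\substack{\mathfrak{p}\notin S\\ q_{\mathfrak{p}}\leq N}}-A_{\mathfrak{p}}^{*}(\mathcal{X})\log q_{\mathfrak{p}}
\]
differs from the partial sum of $-\frac{L_{2}'}{L_{2}}(2)$ by an error controlled (by Deligne's purity) through the terms of weight $\leq 3$ on $H^{2}$, which contribute $O(N^{3/4+\epsilon})$ and are negligible after division by $N$. A Wiener--Ikehara Tauberian theorem, applied under the assumption that the relevant factor of $L_{2}(\mathcal{X}/k,s)$ extends meromorphically to $\Re(s)\geq 2$ with no zeros on $\Re(s)=2$, then converts the Nagao sum into the order of the pole of that factor at $s=2$.

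Third, I would invoke the Tate conjecture for the surface $\mathcal{X}$: the order of the pole of $L_{2}(\mathcal{X}/k,s)$ at $s=2$ equals $\rank\NS(\mathcal{X}/k)$. Combining this with the higher-genus Shioda--Tate formula (as used by Hindry--Pacheco),
\[
\rank\NS(\mathcal{X}/k)=2+\sum_{v\in C^{(0)}}(m_{v}-1)+\rank\bigl(J(X)(K)/\tau B(k)\bigr)+\rank B(k),
\]
and subtracting out the contributions accounted for by the base cohomology and by $a_{\mathfrak{p}}(B)$ (the latter absorbing the $\rank B(k)$ piece via the Tate conjecture for $B$), one isolates $\rank(J(X)(K)/\tau B(k))$ as the pole order to which Nagao's sum converges.

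The main obstacle is, of course, the invocation of the Tate conjecture at step three: it is open for surfaces in this generality. A secondary but still serious analytic obstacle is establishing holomorphy and non-vanishing of the relevant factor of $L_{2}(\mathcal{X}/k,s)$ on the line $\Re(s)=2$, which is needed to apply the Tauberian theorem and which in turn is known only in very special cases (e.g.\ when the local system $R^{1}f_{*}\mathbb{Q}_{\ell}$ is controlled by automorphic data, such as the Sato--Tate-accessible situations treated elsewhere in this paper). Thus the proposal is less a proof than a reduction: modulo Tate's conjecture for $\mathcal{X}$ and standard analytic continuation/non-vanishing for the corresponding piece of $L_{2}$, Conjecture \ref{HPconj} follows from the $\ell$-adic Leray decomposition and a Tauberian argument.
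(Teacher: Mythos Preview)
The statement you are attempting to prove is labeled a \emph{Conjecture} in the paper, not a Theorem: the paper does not give a proof of Conjecture~\ref{HPconj} in general, and indeed no such proof is currently known. What the paper does is state this conjecture (following Hindry--Pacheco) and then establish \emph{special cases} of it---for quadratic-twist families of elliptic and genus-$2$ curves---by computing Nagao's sum directly via the Sato--Tate distribution and matching it against the rank computed from Proposition~\ref{prop2.3}. There is therefore no ``paper's own proof'' to compare your proposal against.

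That said, your outline is not wrong as a \emph{conditional} argument: it is precisely the Rosen--Silverman/Hindry--Pacheco strategy that reduces Conjecture~\ref{HPconj} to (i) Tate's conjecture for $\mathcal{X}$ and (ii) holomorphy and non-vanishing of the relevant piece of $L_{2}(\mathcal{X}/k,s)$ on $\Re(s)=2$. You correctly flag both of these as open in general, and you correctly describe the mechanism (Leray decomposition, Tauberian theorem, Shioda--Tate). So your proposal is an accurate summary of why the conjecture is believed, but it is not a proof, and the paper does not claim one either. If the intent was to prove one of the paper's actual theorems (e.g.\ Theorem~\ref{thm2} or Theorem~\ref{thm21}), note that the paper bypasses the $L$-function machinery entirely: it evaluates $A_{p}(\mathcal{X})$ explicitly as $-\mathfrak{f}_{p}^{2}/p$ (or a multiple thereof) and then reads off the limit as a second moment from the Sato--Tate measure.
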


~\\
In this section, we are mostly interested in the case when $\mathcal{X}$ is a surface with genus $2$ fibers with $C=\mathbb{P}^{1}$. The Sato-Tate conjecture for the application of Conjecture \ref{HPconj} for the above $\mathcal{X}$ is much more complicated from Conjecture \ref{ST} of the previous section. Unlike Conjecture \ref{ST}, there are $52$ possibilities of different Sato-Tate distributions which corresponds to the appearance of $52$ different Sato-Tate groups up to conjugacy. To begin with, we state the main result of this section.

\begin{Remark}
From now on, we assume $B$ is trivial which simplifies Conjecture \ref{HPconj}. Using the above notation, roughly, since we consider fields with zero characteristic, the $K/k$-trace is the largest subvariety of $J(X)$ that can be defined over $k$. For instance, the $K/k$-trace of a non-constant elliptic curve is trivial \cite[Example 2.2]{CON}. For more information, one can refer to \cite{CON}\cite{HP} and \cite{LANG}.
\end{Remark}

\begin{Theorem}
\label{thm2}
Let $y^{2}=f(T)$ be a hyperelliptic curve of genus $2$ defined over $\Q$, that is to say, $f(T)\in\Q[T]$ is a monic, square-free polynomial with degree $5$ or $6$. Define a surface $\mathcal{X}^{f}\rightarrow \mathbb{P}^{1}$:
$$\mathcal{X}^{f} : f(T)y^2=f(x),$$
and suppose the Jacobian of its generic fiber $X^{f}/\Q(T)$ has trivial $\Q(T)/\Q$-trace. Then the Sato-Tate conjecture for abelian surfaces implies Nagao's conjecture for $\mathcal{X}^{f}$. More precisely, if $\mathcal{X}^{f}_{t}$ denotes the genus $2$ curve at $T=t$, define the average trace of Frobenius
$$A_{p}(\mathcal{X}^{f})=\frac{1}{p}\sum_{t=0}^{p-1}a_{p}(\mathcal{X}^{f}_{t}).$$
Then the following equality holds.
$$\lim_{N \to \infty}\frac{1}{\pi(N)}\sum_{p<N}-A_{p}(\mathcal{X}^{f})=\rank J(X^{f})(\mathbb{Q}(T)).$$
\end{Theorem}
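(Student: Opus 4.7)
The plan is to mirror the proof of Theorem \ref{thm10} at the genus-$2$ level, replacing the Sato-Tate conjecture for elliptic curves by the Sato-Tate conjecture for the abelian surface $J_f:=J(C)$, where $C$ denotes the curve $y^2=f(x)$ over $\Q$. For a prime $p$ of good reduction and a parameter $t\in\FF_p$ with $f(t)\neq 0$, the fibre $\mathcal{X}^f_t$ is the quadratic twist of $C$ by $f(t)$, and the quadratic-twist formula for point counts gives
\begin{equation*}
a_p(\mathcal{X}^f_t)=\chi_p(f(t))\,a_p(C),
\end{equation*}
where $\chi_p$ is the Legendre symbol (extended by $\chi_p(0)=0$); the finitely many zeros of $f$ in $\FF_p$ contribute only $O(1)$ to the $t$-sum. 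Point counting on $C$ itself, exactly as in the proof of Proposition \ref{Prop1}, yields $\sum_{t\in\FF_p}\chi_p(f(t))=-a_p(C)+\varepsilon$ with $\varepsilon\in\{0,\pm 1\}$ depending only on the behaviour at $\infty$ (i.e.\ on $\deg f\in\{5,6\}$). Combining these,
\begin{equation*}
-A_p(\mathcal{X}^f)=\frac{a_p(C)^2}{p}+O\!\left(\frac{|a_p(C)|}{p}\right)=\frac{a_p(C)^2}{p}+O(p^{-1/2}),
\end{equation*}
where we used the Weil bound $|a_p(C)|\leq 4\sqrt{p}$.

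The Ces\`aro average of the error term over primes $p\leq N$ is $o(1)$, so the task reduces to computing
\begin{equation*}
L:=\lim_{N\to\infty}\frac{1}{\pi(N)}\sum_{p\leq N}\frac{a_p(C)^2}{p}.
\end{equation*}
Assuming the Sato-Tate conjecture for $J_f$, the normalized Frobenius conjugacy classes $g_p\in G\subset\USp(4)$ are equidistributed with respect to the push-forward of Haar measure $\mu_G$ on the Sato-Tate group $G$, and $a_p(C)/\sqrt{p}=\tr(g_p)$. Since $|\tr(g_p)|^2\leq 16$, boundedness plus equidistribution yield
\begin{equation*}
L=\int_G|\tr(g)|^2\,d\mu_G(g)=\dim_{\C}\End_G(V),
\end{equation*}
where $V=\C^4$ is the standard representation of $G$; the second equality is Schur orthogonality. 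Faltings' isogeny theorem gives $\End_\Q(J_f)\otimes\Q_l\cong\End_{G_\Q}(V_l(J_f))$, and by the very construction of the Sato-Tate group as (a compact form of) the Zariski closure of the image of the Galois representation, this in turn equals $\End_G(V)$. Hence $L=\rank\End_\Q(J_f)$.

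The final input identifies $L$ with a Mordell-Weil rank. Apply Proposition \ref{prop2.3} with base $\PPP^1$ and double cover $C\to\PPP^1$ (so that $C'=C$, $J'=J_f$, and $J=\mathrm{Jac}(\PPP^1)=0$): one obtains $\rank J(X^f)(\Q(T))=\rank\End_\Q(J_f)=L$. The hypothesis of trivial $\Q(T)/\Q$-trace guarantees $B=0$, so the correction term $a_p(B)$ in Conjecture \ref{HPconj} vanishes. Finally, Lemma \ref{lem1} converts the $\pi(N)^{-1}$-averaged statement into the $N^{-1}\log p$-weighted statement of Nagao's conjecture, completing the proof.

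The main obstacle is the identity $\int_G|\tr|^2\,d\mu_G=\rank\End_\Q(J_f)$. For generic $J_f$ (Sato-Tate group $\USp(4)$) the $4$-dimensional representation $V$ is irreducible, so $\dim\End_G(V)=1$ matches $\rank\Z=1$ and the verification is immediate. For any of the remaining special Sato-Tate groups classified in \cite{FKRS}---CM and RM cases, products of elliptic curves, and so on---one must check that the decomposition of $V$ as a $G$-module matches the isogeny decomposition of $J_f$, and this is precisely what Faltings' theorem together with the definition of $G$ provides. All other ingredients---Weil, elementary point counting, Proposition \ref{prop2.3}, and Lemma \ref{lem1}---are routine.
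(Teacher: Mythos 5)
Your proof is correct, and its skeleton is the same as the paper's: the twist identity $a_p(\mathcal{X}^f_t)=\chi_p(f(t))\,\f_p$, reduction of the Nagao average to the second moment of $\f_p/\sqrt p$, the rank identity $\rank J(X^f)(\Q(T))=\rank\End_\Q(J_f)$ from Proposition \ref{prop2.3} (with $C'=C$, base $\PPP^1$), and Lemma \ref{lem1} to pass to the $\log$-weighted sum. Where you genuinely diverge is the key identification. The paper does not prove $\int_{\ST_{J_f}}|\tr|^2\,d\mu=\rank\End_\Q(J_f)$ conceptually: it invokes the classification of the $34$ Sato-Tate groups arising over $\Q$ from \cite{FKRS} and verifies, case by case against Table \ref{my-label}, that the second moment $E[a_p^2/p]$ (third column) coincides with $\rank_{\R}(\End_\Q(J_f)\otimes\R)$ (second column). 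You instead prove this coincidence uniformly: Schur orthogonality gives $\int_G|\tr|^2\,d\mu_G=\dim_\C\End_G(\C^4)$, and Faltings together with the construction of $\ST_{J_f}$ (the commutant of a group equals that of its Zariski closure, and a maximal compact subgroup is Zariski dense in the reductive complexification) identifies $\End_G(\C^4)$ with $\End_\Q(J_f)\otimes\C$. One point to flag explicitly: you must use the full Sato-Tate group, component group included, since only its commutant sees the $\Q$-rational (as opposed to $\bar{\Q}$-rational) endomorphisms; in \cite{FKRS} this is precisely the unconditional correspondence between $\ST_{J_f}$ and the Galois endomorphism module, so the only conjectural input remains equidistribution, as in the paper. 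Your route buys uniformity, avoids the table check, and would extend to higher genus once a Sato-Tate formalism is available there; the paper's route makes the numerical coincidence explicit against the FKRS classification. You are also more careful than the paper about the fibers with $f(t)=0$ and the points at infinity (the $\varepsilon$ and $O(p^{-1/2})$ terms), which the paper silently drops; your observation that they vanish in the Ces\`aro average is the right way to dispose of them.
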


\subsection{The Sato-Tate conjecture for an abelian surface}
~\\
We state briefly the Sato-Tate conjecture for an abelian surface. The presented formulation is from \cite{FKRS}. For a genus $2$ curve $C$, we know from the Weil conjectures for its Jacobian $A=J(C)$, there is a zeta function for each $p$ which can be factored in the following way,
$$Z(A/\mathbb{F}_{p},T)=\frac{1+a_{p}T+b_{p}T^{2}+pa_{p}T^{3}+p^{2}T^{4}}{(1-T)(1-pT)}.$$
We denote the numerator of $Z(A/\mathbb{F}_{p},T)$ by $L_{p}(A,T)$. It is known that $L_{p}(A,T)=\det(1-T\Frob_{p}, V_{l}(A))$ where $\Frob_{p}$ is an arithmetic Frobenius element of $G_{k}=\Gal(k^{sep}/k)$ acting on the (rational) $l$-adic Tate module of $A$, $V_{l}(A)=\mathbb{Q}\otimes T_{l}(A)$ and define the normalized $L$-polynomial $\bar{L}_{p}(A,T)=L_{p}(A,q^{-\frac{1}{2}}T).$ Then we know the roots of $\bar{L}_{p}(A,T)$ have norm $1$ and are stable under complex conjugation as a set. Thus $\bar{L}_{p}(A,T)$ corresponds to a unique element in the unitary symplectic group $\USp(4)$ up to conjugacy: $\bar{L}_{p}(A,T)$ is the characteristic polynomial of the corresponding matrix.
$$\USp(4)=\U(4)\cap \Sp(4,\mathbb{C}),$$
where $\U(4)$ is the group of unitary matrices and $\Sp(4,\mathbb{C})$ is the symplectic group of degree $4$ over $\mathbb{C}$, that is to say, the group of $4\times 4$ symplectic matrices with entries in $\mathbb{C}$.

In \cite{KS}, Katz and Sarnak conjectured the equidistribution of $\bar{L}_{\p}(A,T)$ (also, the equidistribution of trace of Frobenius which appears as the coefficient of $L_{\p}(A,T)$) with respect to a measure arising from the above correspondence. More precisely, $L_{\p}(A,T)$ are equidistributed with respect to the image of the normalized Haar measure on $\Conj(\USp(4))$ which arises from a suitable closed subgroup $G$ of $\USp(4)$. The carefully selected closed group $G\subset \USp(4)$ is called the Sato-Tate group of $A$ and is denoted by $\ST_{A}$. The exact definition can be found in \cite[Definition 2.6]{FKRS}. Using the Sato-Tate group, we can phrase the Sato-Tate conjecture for genus $2$.

\begin{Conjecture}[Refined Sato-Tate for genus 2, \cite{FKRS}]
\label{refined}
For an abelian surface $A$ and its Sato-Tate group $\ST_{A}$, we define $\mu_{\ST_{A}}$ to be the image on $\Conj(\ST_{A})$ of the normalized Haar measure on $\ST_{A}$. Then the classes $s(\p)\in \Conj(\ST_{A})$ are equidistributed with respect to $\mu_{\ST_{A}}$.
\end{Conjecture}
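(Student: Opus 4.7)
The plan is to follow the classical Deligne--Serre framework for equidistribution statements: reduce the geometric equidistribution to analytic statements about L-functions attached to irreducible representations of $\ST_A$, and then apply a Tauberian argument. First I would invoke the Peter--Weyl theorem to reduce Conjecture \ref{refined} to a family of character sum estimates. Since continuous conjugation-invariant functions on the compact group $\ST_A\subset\USp(4)$ are uniformly approximated by linear combinations of characters $\chi_\rho$ of irreducible representations $\rho$, it suffices to show that for every nontrivial irreducible representation $\rho$ of $\ST_A$,
$$\sum_{q_\p\leq X}\chi_\rho(s(\p))=o(\pi_k(X)),$$
while the trivial representation alone contributes the full normalization.

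Next I would attach a motivic L-function $L(s,\rho,A)$ to each such $\rho$. The standard four-dimensional embedding $\ST_A\hookrightarrow\USp(4)$ restricts to the $L$-factors of $A$ itself, and the other irreducible $\rho$ arise inside tensor, symmetric, and exterior powers of this embedding, so $L(s,\rho,A)$ can be built from the Galois representation on $V_\ell(A)$ by standard functorial constructions. The key analytic inputs needed to run a Wiener--Ikehara style Tauberian theorem are: meromorphic continuation of $L(s,\rho,A)$ to a neighborhood of $\Re(s)=1$, and nonvanishing on the line $\Re(s)=1$, for every nontrivial irreducible $\rho$. With these in hand, the character sums above are $o(\pi_k(X))$ by a direct partial summation argument, which yields the desired equidistribution.

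To obtain those analytic inputs, I would proceed case-by-case along the classification of the 52 possible $\ST_A$ in \cite{FKRS}. When $A$ has nontrivial endomorphisms---for instance if $A$ is isogenous to a product of elliptic curves, has quaternionic multiplication, or is of $\GL_2$-type---the relevant $\rho$ factor through representations already known or potentially known to be automorphic, so one can bootstrap the Taylor--Clozel--Harris--Shepherd--Barron potential automorphy machinery used for elliptic curves over totally real fields. The generic case $\ST_A=\USp(4)$ requires potential automorphy of the degree-$4$ symplectic Galois representation $V_\ell(A)$ together with its symmetric and tensor power constructions, for which one would invoke and extend the recent potential automorphy results for abelian surfaces due to Boxer--Calegari--Gee--Pilloni.

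The main obstacle is precisely this last step: establishing potential automorphy and the requisite nonvanishing for \emph{all} symmetric and exterior power L-functions of an irreducible four-dimensional symplectic Galois representation is a deep open problem well beyond what current automorphy lifting theorems supply. Even for generic abelian surfaces over $\Q$, the unconditional potential automorphy of the symmetric square or higher powers of $V_\ell(A)$ is presently out of reach. Consequently, a realistic output of this strategy is a conditional proof---reducing Conjecture \ref{refined} to a precise list of automorphy/nonvanishing statements for the finite set of irreducible representations of each $\ST_A$---together with unconditional proofs in those cases (CM, $\GL_2$-type, quaternionic, etc.) where sufficient endomorphism structure allows the automorphy inputs to be extracted from known results.
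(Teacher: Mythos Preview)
The statement you are trying to prove is not proved in the paper at all: it is explicitly labeled and treated as a \emph{conjecture} (Conjecture~\ref{refined}, quoted from \cite{FKRS}), and the paper uses it only as a hypothesis in Theorems~\ref{thm2}, \ref{thm20}, and \ref{thm21}. There is therefore no ``paper's own proof'' to compare against. The remark following Theorem~\ref{thm21} records exactly which special cases of Conjecture~\ref{refined} are known unconditionally (via \cite{FS} and \cite{JHN}); for the remaining Sato--Tate groups, including the generic case $\USp(4)$, the paper assumes the conjecture rather than proving it.

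Your outline is the standard Deligne--Serre reduction, and you yourself identify the genuine obstruction: the required automorphy and nonvanishing inputs for the symmetric and tensor power $L$-functions of a generic four-dimensional symplectic Galois representation are not available. So what you have written is not a proof but a (correct) explanation of why Conjecture~\ref{refined} remains open and what it would reduce to. That is consistent with the paper's stance, but it should not be presented as a proof proposal for a statement the paper never claims to prove.
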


The refined form of Sato-Tate implies the following.
\begin{enumerate}
    \item From the above identification, the $L$-polynomials of $A$, $\bar{L}_{\p}(A,T)$ in $\Conj(\USp(4))$, are equidistributed with respect to the image of the Haar measure for $\ST_{A}$.
		\item By using the definition of Sato-Tate group, one can get the equidistribution of $\bar{L}_{\p}(A,T)$ in $\Conj(\ST_{A})$. See \cite{FKRS} for details.
		\item One can relate $\ST_{A}$ to the endomorphism ring of $A$. See \cite{FKRS} for details.
\end{enumerate}

One can write down all the possible Sato-Tate groups for genus $2$. It turns out that there are exactly $52$ possible conjugacy classes of the Sato-Tate group for an abelian surface $A$. Moreover, we know that only $34$ of them can arise when $k=\mathbb{Q}$. Table \ref{my-label}, from \cite[Table 8, Table 11]{FKRS}, shows the possible Sato-Tate groups and related information. In this table, the third column represents $E[a_{p}^{2}/p]$, the second moment of the trace of Frobenius of given curve with the fixed Sato-Tate group in the first column. The fourth column gives examples of curves of prescribed Sato-Tate groups.

\begin{table}[tbp]
\centering
\caption{Possible Sato-Tate group over $\mathbb{Q}$ from \cite[Table 8, Table 11]{FKRS}}
\label{my-label}
\begin{tabular}{|c|c|c|l|}
\cline{1-4}
$ST_{J(C)}$ & $\End (J(C))_{\mathbb{R}}$     & $E[a_{p}^{2}/p]$  & Example   \\ \cline{1-4} 
$J(C_{2})$  & $\mathbb{C}$                   & 2        & $y^2=x^5-x$       \\ \cline{1-4}    
$J(C_{4})$  & $\mathbb{C}$                   & 2        & $y^2=x^6+x^5-5x^4-5x^2-x+1$       \\ \cline{1-4}             
$J(C_{6})$  & $\mathbb{C}$                   & 2        & $y^2=x^6-15x^4-20x^3+6x+1$       \\ \cline{1-4}  
$J(D_{2})$  & $\mathbb{R}$                   & 1        & $y^2=x^5+9x$       \\ \cline{1-4}   
$J(D_{3})$  & $\mathbb{R}$                   & 1        & $y^2=x^6+10x^3-2$       \\ \cline{1-4}  
$J(D_{4})$  & $\mathbb{R}$                   & 1        & $y^2=x^5+3x$       \\ \cline{1-4}   
$J(D_{6})$  & $\mathbb{R}$                   & 1        & $y^2=x^6+3x^5+10x^3-15x^2+15x-6$       \\ \cline{1-4}   
$J(T)$      & $\mathbb{R}$                   & 1        & $y^2=x^6+6x^5-20x^4+20x^3-20x^2-8x+8$       \\ \cline{1-4}  
$J(O)$      & $\mathbb{R}$                   & 1        & $y^2=x^6-5x^4+10x^3-5x^2+2x-1$       \\ \cline{1-4}   
$C_{2,1}$   & $M_{2}(\mathbb{R})$            & 4        & $y^2=x^6+1$       \\ \cline{1-4}    
$C_{6,1}$   & $\mathbb{C}$                   & 2        & $y^2=x^6+6x^5-30x^4+20x^3+15x^2-12x+1$       \\ \cline{1-4}     
$D_{2,1}$   & $\mathbb{R}\times \mathbb{R}$  & 2        & $y^2=x^5+x$       \\ \cline{1-4}    
$D_{4,1}$   & $\mathbb{R}$                   & 1        & $y^2=x^5+2x$       \\ \cline{1-4}     
$D_{6,1}$   & $\mathbb{R}$                   & 1        & $y^2=x^6+6x^5-30x^4-40x^3+60x^2+24x-8$       \\ \cline{1-4}     
$D_{3,2}$   & $\mathbb{R}\times \mathbb{R}$  & 2        & $y^2=x^6+4$       \\ \cline{1-4}   
$D_{4,2}$   & $\mathbb{R}\times\mathbb{R}$   & 2        & $y^2=x^6+x^5+10x^3+5x^2+x-2$       \\ \cline{1-4}   
$D_{6,2}$   & $\mathbb{R}\times\mathbb{R}$ & 2          & $y^2=x^6+2$      \\ \cline{1-4}   
$O_{1}$      & $\mathbb{R}$                   & 1       & $y^2=x^6+7x^5+10x^4+10x^3+15x^2+17x+4$       \\ \cline{1-4}
$E_{1}$      & $M_{2}(\mathbb{R})$            & 4       & $y^2=x^6+x^4+x^2+1$       \\ \cline{1-4}  
$E_{2}$      & $\mathbb{C}$                   & 2       & $y^2=x^6+x^5+3x^4+3x^2-x+1$       \\ \cline{1-4}
$E_{3}$      & $\mathbb{C}$                   & 2       & $y^2=x^5+x^4-3x^3-4x^2-x$       \\ \cline{1-4} 
$E_{4}$      & $\mathbb{C}$                   & 2       & $y^2=x^5+x^4+x^2-x$       \\ \cline{1-4}  
$E_{6}$      & $\mathbb{C}$                   & 2       & $y^2=x^5+2x^4-x^3-3x^2-x$       \\ \cline{1-4}  
$J(E_{1})$   & $\mathbb{R}\times\mathbb{R}$   & 2       & $y^2=x^5+x^3+x$       \\ \cline{1-4} 
$J(E_{2})$   & $\mathbb{R}$                   & 1       & $y^2=x^5+x^3-x$       \\ \cline{1-4} 
$J(E_{3})$   & $\mathbb{R}$                   & 1       & $y^2=x^6+x^3+4$       \\ \cline{1-4} 
$J(E_{4})$   & $\mathbb{R}$                   & 1       & $y^2=x^5+x^3+2x$       \\ \cline{1-4}
$J(E_{6})$   & $\mathbb{R}$                   & 1       & $y^2=x^6+x^3-2$      \\ \cline{1-4}
$F_{ac}$     & $\mathbb{R}$                   & 1       & $y^2=x^5+1$       \\ \cline{1-4} 
$F_{a,b}$    & $\mathbb{R} \times \mathbb{R}$ & 2       & $y^2=x^6+3x^4+x^2-1$       \\ \cline{1-4}
$N(G_{1,3})$ & $\mathbb{R}\times\mathbb{R}$   & 2       & $y^2=x^6+3x^4-2$       \\ \cline{1-4}
$G_{3,3}$    & $\mathbb{R}\times \mathbb{R}$  & 2       & $y^2=x^6+x^2+1$       \\ \cline{1-4} 
$N(G_{3,3})$ & $\mathbb{R}$                   & 1       & $y^2=x^6+x^5+x-1$       \\ \cline{1-4} 
$\USp(4)$       & $\mathbb{R}$                   & 1       & $y^2=x^5-x+1$    \\ \cline{1-4} 

\end{tabular}
\end{table}

Now we can resume proof of Theorem \ref{thm2}.

\begin{proof}[Proof of Theorem \ref{thm2}]
Denote by $\f_{p}$ the trace of Frobenius of the genus $2$ hyperelliptic curve $y^{2}=f(X)$. For fixed $T=t$, we have 
$$a_{p}(\mathcal{X}^{f}_{t})=\left(\frac{f(t)}{p}\right) \f_{p}$$
and we know
$$A_{p}(\mathcal{X}^{f})=\frac{1}{p}\sum_{t=0}^{p-1}a_{p}(C^{f}_{t})=\frac{1}{p}\sum_{t=0}^{p-1}\left(\frac{f(t)}{p}\right) \f_{p}=-\frac{\f_{p}^{2}}{p}.$$
Therefore 

\begin{displaymath}
\lim_{N \to \infty}\frac{1}{\pi(N)}\sum_{p<N}-A_{p}(\mathcal{X}^{f})=\lim_{N \to \infty}\frac{1}{\pi(N)}\sum_{p<N}\frac{\f^{2}_{p}}{p}=
\left(
\begin{tabular}{c} Second moment of normalized \\ trace of Frobenius $\f_{p}/\sqrt{p}$
\end{tabular} \right).
\end{displaymath}
We know, from the above classification, the Jacobian $J_{f}$ of $y^2=f(x)$ has one of the groups above as its the Sato-Tate group. And one can easily read off the second moment of the normalized trace of Frobenius from the third column of Table \ref{my-label}. 
On the other hand, we know from Proposition \ref{prop2.3},
$$\rank J(X^{f})(\mathbb{Q}(T))=\rank \End_{\mathbb{Q}}(J_{f})=\rank_{\mathbb{R}} (\End_{\mathbb{Q}}(J_{f})\otimes_{\mathbb{Z}}{\mathbb{R}}).$$
Again, the value $\rank_{\mathbb{R}} (\End_{\mathbb{Q}}(J_{f})\otimes_{\mathbb{Z}}{\mathbb{R}})$ is the $\mathbb{R}-$rank of the endomorphism ring on the second column of Table \ref{my-label} for the given Sato-Tate group of $y^2=f(x)$. We can see the coincidence of these two different values for each Sato-Tate group, and thus we get Conjecture \ref{HPconj} for each Sato-Tate group.
\end{proof}

\begin{Example}{(Case $\ST_{J(C)}=\USp(4)$)}
For a genus $2$ curve $C$, we know its Jacobian $A=J(C)$ has $L$-polynomial 
$$L_{p}(A,T)=1+a_{p}T+b_{p}T^{2}+pa_{p}T^{3}+p^{2}T^{4}$$
and we can normalize it by $\bar{L}_{p}(A,T)=L_{p}(A,q^{-\frac{1}{2}}T)$.
As we explained before, we know the roots of $\bar{L}_{p}(A,T)$ have norm $1$ and are stable under complex conjugation as a set. 
If we represent $\frac{1}{\sqrt{p}}\Frob_{p}$ as a matrix, it is conjugate to the matrix
\[
\begin{bmatrix}
    e^{i\theta_{1}} &  &  &  \\
                    & e^{i\theta_{2}} &  & \\
                    &                 & e^{-i\theta_{1}} &  \\
     & &  & e^{-i\theta_{2}} 
\end{bmatrix}
\] 
which is in $\USp(4)$. We can write
$$\det(1-T\Frob_{p}, V_{1}(A))=(1-T\sqrt{p}e^{i\theta_{1}})(1-T\sqrt{p}e^{i\theta_{2}})(1-T\sqrt{p}e^{-i\theta_{1}})(1-T\sqrt{p}e^{-i\theta_{2}}).$$
Thus we have 
$$a_{p}=\sqrt{p}(2\cos\theta_{1}+2\cos\theta_{2}).$$
Let's compute explicitly the case when the Sato-Tate group $\ST_{J(C)}$ is $\USp(4)$. We know an explicit Haar measure on $\USp(4)$,
$$\frac{8}{\pi^{2}}(\cos\theta_{1}-\cos\theta_{2})^{2}\sin^{2}\theta_{1}\sin^{2}\theta_{2}~d\theta_{1}d\theta_{2}.$$
Thus the second moment of the linear coefficient of the normalized $L$-polynomial is 
$$4\int_{0}^{\pi}\int_{0}^{\pi}(\cos\theta_{1}+\cos\theta_{2})^{2}\cdot \frac{8}{\pi^{2}}(\cos\theta_{1}-\cos\theta_{2})^{2}\sin^{2}\theta_{1}\sin^{2}\theta_{2}~d\theta_{1}d\theta_{2}=1,$$
which means 
$$\lim_{N \to \infty}\frac{1}{\pi(N)}\sum_{p<N}-A_{p}(C)=\lim_{N \to \infty}\frac{1}{\pi(N)}\sum_{p<N}\frac{a^{2}_{p}}{p}=1.$$
On the other hand, from the last line of Table~\ref{my-label} and Theorem \ref{thm2}, this is equal to the rank of $J(C)(\mathbb{Q}(T))$.
\end{Example}

Thus we have shown Nagao's conjecture for the twist 
$$f(T)y^{2}=f(x),$$
where $f(x)\in\mathbb{Q}[x]$ is a polynomial of degree $3$ or $5$. How much can we see when the twist has a little different form? Actually we can prove that the Sato-Tate conjecture implies Nagao's conjecture for twists of the form
$$f(T^{2})y^2=f(x)$$
with $f(x)\in\mathbb{Q}[x]$ of degree $3$ or $5$. For proving this, let's first introduce a theorem of S. Peterson \cite{PET}. Denote $k^{*}=k\setminus \{0\}$. Recall the projective linear group
$$\PGL_{2}(k):=\GL_{2}(k)/k^{*}$$
which may be identified with the automorphism group of $\PPP^{1}$, i.e.,
\begin{displaymath}
\left(\begin{array}{cc} a & b \\  c & d \end{array}\right): X\mapsto \frac{aX+b}{cX+d}
\end{displaymath}
\begin{Theorem}
\label{thm16}
Let $k$ be a number field. Let $f(x)\in k[x]$ be a monic, squarefree polynomial of odd degree $d$. Suppose that there is an automorphism $\sigma(x)\in \PGL_{2}(k)$ which permutes the roots of $f$. Suppose furthermore that $\sigma(x)$ does not have a pole at $\infty$. Let 
$$D(T)=f\left(\frac{T^{2}}{f(\sigma(\infty))}+\sigma^{-1}(\infty)\right).$$
Denote by $J_{f}$, $J_{D}$, and $J^{D}_{f}$ the Jacobian defined with the curves $y^{2}=f(x)$, $y^{2}=D(T)$, and $D(T)y^{2}=f(x)$ respectively. Also, denote by $a_{\p}(J_{D})$ and $a_{\p}(J_{f})$ the linear coefficient of $L_{\p}(J_{D},T)$ and $L_{\p}(J_{f},T)$ respectively. Then we have 

\begin{enumerate}
\item There is a $k$-isogeny $J_{D} \sim J_{f}\times J_{f}$.
\item We have $\rank J^{D}_{f}(k(T))=2\rank \End_{k}(J_{f})$.
\item The $L$-polynomial of $J_{D}$ factors nicely, $L_{\p}(J_{D},T)=L_{\p}(J_{f},T)^{2}$, i.e., the linear coefficients satisfy
$$a_{\p}(J_{D})=2a_{\p}(J_{f}).$$
\end{enumerate}
\end{Theorem}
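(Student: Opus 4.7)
My plan is to construct a Klein four-group of involutions on the curve $C_{D}: y^{2} = D(T)$ whose three degree-two quotients are, respectively, $C: y^{2} = f(x)$, another copy of $C$, and $\mathbb{P}^{1}$, and then apply the standard Kani--Rosen identity to conclude $J_{D} \sim J_{f} \times J_{f}$ over $k$. Writing $\sigma(x) = (ax+b)/(cx+d)$, the assumption that $\sigma(\infty)$ is finite forces $c \neq 0$, and I set $\alpha := \sigma^{-1}(\infty) = -d/c$ and $\beta := f(\sigma(\infty)) = f(a/c)$. Because $D(T) = f(T^{2}/\beta + \alpha)$ depends on $T$ only through $T^{2}$, the map $\iota_{1}(T,y) = (-T, y)$ is an involution of $C_{D}$; together with the hyperelliptic involution $\iota_{3}(T,y) = (T, -y)$ and their composition $\iota_{2}(T,y) = (-T, -y)$, this generates a Klein four subgroup $V \subset \mathrm{Aut}_{k}(C_{D})$. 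Both $C_{D}/V$ (with coordinate $T^{2}$) and $C_{D}/\iota_{3}$ are isomorphic to $\mathbb{P}^{1}$, so the Kani--Rosen relation
\[
J_{C_{D}/\iota_{1}} \times J_{C_{D}/\iota_{2}} \times J_{C_{D}/\iota_{3}} \sim J_{D} \times J_{C_{D}/V}^{2}
\]
collapses to $J_{D} \sim J_{C_{D}/\iota_{1}} \times J_{C_{D}/\iota_{2}}$.

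The heart of the argument is to identify both residual quotients with $C$. For $\iota_{1}$ this is immediate: the invariants $u := T^{2}/\beta + \alpha$ and $y$ satisfy $y^{2} = f(u)$ by the very definition of $D$, so $(T,y) \mapsto (u, y)$ realizes $C_{D}/\iota_{1} \cong C$. For $\iota_{2}$ I exploit $\sigma$: matching roots on both sides over $\bar{k}$ yields the polynomial identity
\[
f(\sigma(x)) = \frac{c^{d} \beta \, f(x)}{(cx + d)^{d}}.
\]
Substituting $x = T^{2}/\beta + \alpha$ and using $c\alpha + d = 0$ to reduce $cx + d$ to $cT^{2}/\beta$ produces the cleaner identity $f(\sigma(T^{2}/\beta + \alpha)) = \beta^{d+1} D(T)/T^{2d}$. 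Since $d$ is odd, $(d+1)/2$ is an integer, so I can define
\[
\psi(T,y) = \left(\sigma\!\left(T^{2}/\beta + \alpha\right),\; \frac{\beta^{(d+1)/2} \, y}{T^{d}}\right),
\]
and a direct check shows $\psi$ maps $C_{D}$ to $C$. The oddness of $d$ also makes the rational function $T^{-d} y$ invariant under $\iota_{2}$, so $\psi$ factors through $C_{D}/\iota_{2}$ as a degree-one map and $C_{D}/\iota_{2} \cong C$.

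These two isomorphisms give part~(1), $J_{D} \sim J_{f} \times J_{f}$. Part~(3) is then immediate from the $k$-isogeny invariance of $L$-polynomials: $L_{\mathfrak{p}}(J_{D}, T) = L_{\mathfrak{p}}(J_{f}, T)^{2}$, whose linear coefficient is $a_{\mathfrak{p}}(J_{D}) = 2\, a_{\mathfrak{p}}(J_{f})$. For part~(2), I apply Proposition~\ref{prop2.3}(c) with $C' = C_{D}$, $C = \mathbb{P}^{1}$, $X$ the curve $y^{2} = f(x)$, and $\mathscr{H} = \mathcal{X}^{D}_{f}$:
\[
\rank J^{D}_{f}(k(T)) = \rank \Hom_{k}(J_{D}, J_{f}) - \rank \Hom_{k}(J_{\mathbb{P}^{1}}, J_{f}) = \rank \Hom_{k}(J_{f}^{2}, J_{f}) = 2\, \rank \End_{k}(J_{f}).
\]
The main obstacle is the construction and verification of $\psi$: the chain of identities running from the root-permutation property of $\sigma$ through to $y'^{2} = f(x')$ requires the cancellation $c\alpha + d = 0$, the parity of $d$ to extract the square root $\beta^{(d+1)/2}$, and compatibility with $\iota_{2}$ to line up simultaneously; in particular, the oddness of $d$ is essential, and all pieces must be defined over $k$ in order to obtain a $k$-isogeny rather than merely a $\bar{k}$-isogeny.
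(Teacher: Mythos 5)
Your argument is correct, but it takes a different route from the paper: the paper simply quotes parts (1) and (2) from Petersen's Theorem 7.4 in \cite{PET} and, exactly as you do, deduces (3) from the isogeny-invariance of $L$-polynomials. You instead reprove (1) from scratch via the Klein four-group $V=\{1,\iota_1,\iota_2,\iota_3\}$ on $C_D\colon y^2=D(T)$ and the Kani--Rosen relation, identifying $C_D/\iota_1\cong C$ through $(T,y)\mapsto(T^2/\beta+\alpha,\,y)$ and $C_D/\iota_2\cong C$ through the explicit map $\psi$ built from the identity $(cx+e)^d f(\sigma(x))=c^d\beta f(x)$ (your computation of the constant $c^d\beta$ and the cancellation from $c\alpha+e=0$ are right, and the parity of $d$ is indeed what makes $\beta^{(d+1)/2}y/T^d$ a $k$-rational $\iota_2$-invariant); you then obtain (2) from the paper's own Proposition \ref{prop2.3}(c) with $C'=C_D$, $C=\mathbb{P}^1$, which matches the rank formula the paper itself extracts from that proposition. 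What your route buys is a self-contained proof, at the cost of a few small verifications you should make explicit: that $\beta=f(\sigma(\infty))\neq 0$ and $\alpha=\sigma^{-1}(\infty)$ is not a root of $f$ (both automatic because $\sigma$ permutes the roots and $\infty$ is not one), hence $D$ is squarefree of degree $2d$ and $C_D$ is a genuine hyperelliptic curve of genus $d-1$, consistent with $\dim J_f^2$; and that the induced map $C_D/\iota_2\to C$ really has degree one, which follows from the function-field count $[k(T,y):k(T^2)]=4$ and $[k(x',y'):k(x')]=2$, so $\deg\psi=2$. With these points noted, your proof is a complete replacement for the citation to \cite{PET}.
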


\begin{proof}
(1) and (2) follows from \cite[Theorem 7.4]{PET}. Isogenous abelian varieties have the same $L$-function, thus (3) follows.
\end{proof}

For the next theorem, we adopt the notation from Theorem \ref{thm16}.

\begin{Theorem}
\label{thm20}
Let $f(x) \in \mathbb{Q}[x]$ be an monic, squarefree polynomial of degree $3$ or $5$. Suppose that there is an automorphism $\sigma(x)\in \PGL_{2}(\mathbb{Q})$ which permutes the roots of $f$. Suppose furthermore the $\sigma(x)$ does not have a pole at $\infty$. Let $D(T)=f(\frac{T^2}{f(\sigma(\infty))})$. Define a surface $\mathcal{X}_{D}^{f}\rightarrow \PPP^{1}$ by
$$\mathcal{X}_{D}^{f}: D(T)y^{2}=f(x)$$
and assume the Jacobian of its generic fiber $X_{D}^{f}/\Q(T)$ has trivial $\mathbb{Q}(T)/\Q$-trace. Then the following hold.
\begin{enumerate}
\item
If $f(x)$ has degree $3$, Nagao's conjecture is true for $\mathcal{X}^{f}_{D}$.
\item
If $f(x)$ has degree $5$, the Sato-Tate conjecture for abelian surfaces implies Nagao's conjecture for $\mathcal{X}^{f}_{D}$ (Conjecture \ref{HPconj}).
~\\
 Moreover, if $f(x)$ is a self-reciprocal (palindromic) polynomial, we can choose $\sigma(x)=\frac{1}{x}$. Then Nagao's conjecture is true for the surface
$$f(T^2)y^{2}=f(x).$$  
\end{enumerate}
\end{Theorem}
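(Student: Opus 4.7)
The plan is to mimic the quadratic-twist computation of Theorems~\ref{thm10} and~\ref{thm2}, using Peterson's result (Theorem~\ref{thm16}) to handle the more general base $y^{2}=D(T)$. First, the fibre of $\mathcal{X}_{D}^{f}$ at $T=t$ is the quadratic twist of $y^{2}=f(x)$ by $D(t)$, so
$$a_{p}(\mathcal{X}^{f}_{D,t}) \;=\; \chi_{p}(D(t))\,\f_{p},$$
where $\chi_{p}$ is the Legendre symbol (with $\chi_{p}(0)=0$) and $\f_{p}$ denotes the trace of Frobenius of $J_{f}$. Summing over $t$ and dividing by $p$ gives
$$A_{p}(\mathcal{X}^{f}_{D}) \;=\; \frac{\f_{p}}{p}\sum_{t\in\mathbb{F}_{p}}\chi_{p}(D(t)).$$

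Second, I would identify the inner character sum with $-a_{p}(J_{D})$ up to $O(1)$. Affine point counting on $y^{2}=D(T)$ gives $p+\sum_{t}\chi_{p}(D(t))$, whereas the Weil formula yields $\#C_{D}(\mathbb{F}_{p})=p+1-a_{p}(J_{D})$; the at-most-two points at infinity contribute only $O(1)$. Hence $\sum_{t}\chi_{p}(D(t))=-a_{p}(J_{D})+O(1)$, and Theorem~\ref{thm16}(3) gives $a_{p}(J_{D})=2\f_{p}$. Using the Weil bound $|\f_{p}|\ll\sqrt{p}$, I conclude
$$-A_{p}(\mathcal{X}^{f}_{D}) \;=\; \frac{2\f_{p}^{2}}{p}+O\!\left(\frac{1}{\sqrt{p}}\right).$$

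Third, averaging over $p\leq N$ and converting to the weighted Nagao sum via Lemma~\ref{lem1}, the claim reduces to evaluating the normalized second moment $\lim_{N\to\infty}\pi(N)^{-1}\sum_{p<N}\f_{p}^{2}/p$. For $\deg f=3$, $J_{f}$ is an elliptic curve over $\mathbb{Q}$ and the Sato-Tate conjecture is a theorem; in every case the second moment equals $1$, exactly as in the proof of Theorem~\ref{thm10} (the CM field, being imaginary quadratic, is never contained in $\mathbb{Q}$). For $\deg f=5$, $J_{f}$ is an abelian surface and, under Conjecture~\ref{refined}, the second moment is read from the $E[a_{p}^{2}/p]$ column of Table~\ref{my-label} according to the Sato-Tate group of $J_{f}$. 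In either case $\lim_{N}\pi(N)^{-1}\sum_{p<N}-A_{p}(\mathcal{X}^{f}_{D})=2E[\f_{p}^{2}/p]$.

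Finally, Theorem~\ref{thm16}(2) identifies the rank: $\rank J(X_{D}^{f})(\mathbb{Q}(T))=2\rank\End_{\mathbb{Q}}(J_{f})$. In case (1), $\End_{\mathbb{Q}}(E)=\mathbb{Z}$ has rank $1$, giving Nagao rank $2=2\cdot 1$. In case (2), the coincidence $\rank_{\mathbb{R}}\bigl(\End_{\mathbb{Q}}(J_{f})\otimes\mathbb{R}\bigr)=E[\f_{p}^{2}/p]$ holds row by row in Table~\ref{my-label}; this is exactly the matching already verified in the proof of Theorem~\ref{thm2}, and multiplying by $2$ closes the proof. The ``moreover'' clause is then immediate: for monic palindromic $f$ one has $f(0)=1$, and $\sigma(x)=1/x$ permutes the roots with $\sigma(\infty)=\sigma^{-1}(\infty)=0$, so $D(T)=f(T^{2}/f(0))=f(T^{2})$ and the surface is the claimed $f(T^{2})y^{2}=f(x)$, a direct specialization of (1) or (2). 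The only technical annoyance is the $O(1)$ correction from the points at infinity of $y^{2}=D(T)$ (its precise value depending on whether the leading coefficient of $D$ is a quadratic residue mod $p$), but the Weil bound on $\f_{p}$ absorbs it into an $O(1/\sqrt{p})$ error whose average vanishes.
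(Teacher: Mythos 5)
Your proposal follows essentially the same route as the paper: express each fibre as the quadratic twist of $y^{2}=f(x)$ by $D(t)$, use Theorem~\ref{thm16}(3) to convert the character sum $\sum_{t}\chi_{p}(D(t))$ into $-2\f_{p}$, pass to the unweighted average via Lemma~\ref{lem1}, read the second moment off Table~\ref{my-label}, and match it against $\rank J(X^{f}_{D})(\Q(T))=2\rank\End_{\Q}(J_{f})$ from Theorem~\ref{thm16}(2); in fact you are slightly more careful than the paper, which silently ignores the $O(1)$ discrepancy coming from the points at infinity of the even-degree curve $y^{2}=D(T)$, an error your Weil-bound argument correctly absorbs. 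The only point to flag is the final ``moreover'' clause: calling it ``a direct specialization of (1) or (2)'' only yields the \emph{conditional} statement when $\deg f=5$, whereas the theorem asserts unconditional truth there; the intended reason is that a palindromic quintic gives a curve with the extra involution $(x,y)\mapsto(1/x,y/x^{3})$, so $J_{f}$ is a non-generic (split) abelian surface whose Sato--Tate group falls among the cases proved in \cite{FS}\cite{JHN} --- a justification the paper's own proof also omits, but which you would need to state to close that part of the claim.
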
 

\begin{proof}
By Theorem \ref{thm16}(3), we have $a_{p}(J_{D})=2a_{p}(J_{f})$ Thus we have
$$\sum^{p-1}_{t=0}\left(\frac{D(t)}{p}\right)=2\sum^{p-1}_{t=0} \left(\frac{f(t)}{p}\right).$$
Therefore if we write the trace of Frobenius of the curve $y^2=f(X)$ by $\f_{p}$, we have
$$a_{p}(\mathcal{X}_{D,t}^{f})=\left(\frac{D(t)}{p}\right)\f_{p},$$
where $\mathcal{X}_{D,t}^{f}$ is the specialization of $\mathcal{X}_{D}^{f}$ at $T=t$. Thus we obtain
$$p\cdot A_{p}(\mathcal{X}_{D}^{f})=\sum_{t=0}^{p-1}a_{p}(\mathcal{X}_{D,t}^{f})=\sum_{t=0}^{p-1} \left(\frac{D(t)}{p}\right)\f_{p}=-2\f_{p}^{2}$$
and
$$\lim_{N \to \infty}\frac{1}{\pi(N)}\sum_{p<N}-A_{p}(\mathcal{X}_{D}^{f})=\lim_{N \to \infty}\frac{1}{\pi(N)}~2\sum_{p<N}\frac{\f^{2}_{p}}{p},$$ 
which corresponds to twice of the second moment of the normalized trace of Frobenius in Table~\ref{my-label}, which is equal to the rank of endomorphism ring of $J_{f}$. Thus by using Theorem \ref{thm16}(2) we can conclude that the Sato-Tate conjecture for abelian surfaces implies Nagao's conjecture for the family.

\end{proof}

Moreover, we can generalize to the higher genus case assuming a nice factorization of the relevant Jacobian.

\begin{Theorem}
\label{thm21}
Let $y^{2}=f(x)$ be an elliptic curve or a hyperelliptic curve of genus $2$ defined over $\Q$. We consider a surface $\mathcal{X}^{f}_{D}\rightarrow \PPP^{1}$ which is defined using a hyperelliptic curve $y^2=D(T)\in\Q[T]$
$$\mathcal{X}^{f}_{D}: D(T)y^{2}=f(x).$$
Denote by $J_{f}$ and $J_{D}$ the Jacobian defined with the curves $y^2=f(x)$ and $y^2=D(T)$ respectively. Also, denote the generic fiber of $\mathcal{X}^{f}_{D}$ by $X^{f}_{D}/\Q(T)$, and we assume that $X^{f}_{D}/\Q(T)$ has trivial $\Q(T)/\Q$-trace.    
Assume that the Jacobian of $D$ has a ($\Q$-isogenous) factorization as $r$ copies of $J_{f}$, i.e., $J_{D}\sim J_{f}^r$. Then the following hold.
\begin{enumerate}
\item
If $y^{2}=f(x)$ is an elliptic curve, Nagao's conjecture is true for $\mathcal{X}^{f}_{D}$.
\item
If $y^{2}=f(x)$ is a hyperelliptic curve of genus $2$, the Sato-Tate conjecture for abelian surfaces implies Nagao's conjecture for $\mathcal{X}^{f}_{D}$ (Conjecture \ref{HPconj}).
\end{enumerate}
\end{Theorem}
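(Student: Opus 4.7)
The plan is to follow the template of Theorems~\ref{thm10}, \ref{thm2}, and \ref{thm20}: (a) express the fibral trace $a_p(\mathcal{X}^{f}_{D,t})$ as a quadratic-character twist of $\f_p := a_p(J_f)$; (b) sum this twist over $t \in \mathbb{F}_p$ using a point count on the double cover $y^2 = D(T)$; (c) use the isogeny $J_D \sim J_f^r$ to replace $\sum_t (D(t)/p)$ by $-r\f_p + O(1)$; (d) evaluate the resulting second moment via Sato--Tate; and (e) match the value to the Mordell--Weil rank via Proposition~\ref{prop2.3}.

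First, for a prime $p$ of good reduction and $t \in \mathbb{F}_p$ with $D(t) \neq 0$, the fiber $D(t) y^2 = f(x)$ is the quadratic twist of $y^2 = f(x)$ by $D(t)$, so each Frobenius eigenvalue on $H^1$ is scaled by $(D(t)/p)$, giving $a_p(\mathcal{X}^{f}_{D,t}) = (D(t)/p)\, \f_p$. A direct count of affine $\mathbb{F}_p$-points on $y^2 = D(T)$, together with the at most two points at infinity and the Weil formula $\#C_D(\mathbb{F}_p) = p + 1 - a_p(J_D)$, yields
\[
\sum_{t \in \mathbb{F}_p} \left(\frac{D(t)}{p}\right) \;=\; -a_p(J_D) + O(1).
\]
Since $J_D \sim_{\Q} J_f^r$, isogenous abelian varieties share $L$-polynomials, so $L_p(J_D,T) = L_p(J_f,T)^r$ at primes of good reduction, giving $a_p(J_D) = r\f_p$. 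Combining and using the Weil bound $|\f_p| = O(\sqrt{p})$,
\[
p \cdot A_p(\mathcal{X}^{f}_{D}) \;=\; \f_p \sum_{t \in \mathbb{F}_p} \left(\frac{D(t)}{p}\right) \;=\; -r\, \f_p^2 + O(\sqrt{p}),
\]
hence $-A_p(\mathcal{X}^{f}_{D}) = r\, \f_p^2/p + O(1/\sqrt{p})$. The error term averages to $O(1/\sqrt{N}) \to 0$, so
\[
\lim_{N \to \infty} \frac{1}{\pi(N)} \sum_{p \leq N} -A_p(\mathcal{X}^{f}_{D}) \;=\; r \cdot \lim_{N \to \infty} \frac{1}{\pi(N)} \sum_{p \leq N} \frac{\f_p^2}{p}.
\]

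To evaluate the right-hand side, I invoke Sato--Tate. In case~(1), Sato--Tate for elliptic curves over $\Q$ is a theorem, and integrating $4\cos^2\theta$ against the appropriate measure (semicircle in the non-CM case, or the mixed measure in the CM case described after Conjecture~\ref{ST}) gives $E[\f_p^2/p] = 1 = \rank_{\R}(\End_\Q(J_f) \otimes \R)$, since CM on an elliptic curve is never defined over $\Q$. In case~(2), assuming Sato--Tate for abelian surfaces, the equality $E[\f_p^2/p] = \rank_{\R}(\End_\Q(J_f) \otimes \R)$ is precisely the case-by-case verification already carried out in the proof of Theorem~\ref{thm2} using Table~\ref{my-label}. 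On the algebraic side, Proposition~\ref{prop2.3}(c) applied with $C = \PPP^1$, $C' = \{y^2 = D(T)\}$, and $X = \{y^2 = f(x)\}$ yields
\[
\rank J(X^{f}_{D})(\Q(T)) \;=\; \rank \Hom_{\Q}(J_D, J_f) \;=\; r \cdot \rank \End_{\Q}(J_f),
\]
the second equality because the isogeny $J_D \sim_\Q J_f^r$ induces an isomorphism of Hom groups modulo torsion. Both sides of the averaged Nagao sum therefore equal $r \cdot \rank \End_{\Q}(J_f)$, and Lemma~\ref{lem1} converts back to the logarithmic form appearing in Conjecture~\ref{HPconj}.

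The principal technical obstacle is the rank identification via Proposition~\ref{prop2.3}(c): one must verify that it applies when $C'$ is a hyperelliptic curve of arbitrarily high genus (not merely the specific doubling construction of Petersen used in Theorem~\ref{thm16}), and that the trivial $\Q(T)/\Q$-trace hypothesis is exactly what is needed to promote the inclusion $r \cdot \rank \End_\Q(J_f) \leq \rank J(X^{f}_{D})(\Q(T))$ coming from the isogeny to an equality. A secondary issue in case~(2) is the bookkeeping across all $34$ Sato--Tate groups over $\Q$, but this is inherited wholesale from Theorem~\ref{thm2} and requires no new computation here.
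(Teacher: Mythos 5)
Your proposal is correct and follows essentially the same route as the paper's proof: express $a_p(\mathcal{X}^{f}_{D,t})$ as a quadratic twist of $\f_p$, use $J_D\sim J_f^r$ to get $a_p(J_D)=r\f_p$ and hence $-A_p(\mathcal{X}^f_D)\approx r\f_p^2/p$, evaluate the second moment via Sato--Tate (Table~\ref{my-label} in the genus~$2$ case), identify the rank as $r\cdot\rank\End_{\Q}(J_f)$ through Proposition~\ref{prop2.3}, and convert with Lemma~\ref{lem1}. Your explicit $O(1)$ and $O(1/\sqrt p)$ bookkeeping for the character sum and the points at infinity is slightly more careful than the paper's, but it changes nothing substantive.
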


Note that Theorem \ref{thm20} is a special case of Theorem \ref{thm21} when $r=2$.

\begin{proof}
Since isogenous abelian varieties have the same $L$-polynomial, from the factorization $J_{D}\sim J_{f}^r$, we can deduce
$$L_{p}(J_{D},T)=L_{p}(J_{f},T)^{r}.$$
Thus we know the trace of Frobenius of $J_{D}$ and $J_{f}$, $a_{p}(J_{D})$ and $\f_{p}$ respectively, satisfy the following relation
$$a_{p}(J_{D})=r\cdot \f_{p},$$
which implies the equality
$$\sum_{t-0}^{p-1}\left(\frac{D(t)}{p}\right)=r\sum_{t=0}^{p-1}\left(\frac{f(t)}{p}\right)=-r\cdot \f_{p}.$$
On the other hand, we can write the trace of Frobenius for each fiber of $\mathcal{X}^{f}_{D,t}$ as
$$a_{p}(\mathcal{X}^{f}_{D,t})=\left(\frac{D(t)}{p}\right)\cdot \f_{p},$$
thus we have
$$\sum_{t=0}^{p-1}a_{p}(\mathcal{X}^{f}_{D,t})=\sum_{t=0}^{p-1}\left(\frac{D(t)}{p}\right)\cdot \f_{p}=-r\cdot \f_{p}^2,$$
using the factorization of $J_{D}$.
Thus if $y^2=f(x)$ is an elliptic curve (if say, defined over a number field $k$),
\begin{displaymath}
 \lim_{N \to \infty}\frac{1}{\pi(N)}\sum_{p<N}-A_{p}(\mathcal{X}^{f}_{D}) = \left\{ 
\begin{array}{ll} 
2r & \textrm{if $E$ has CM and $k$ contains the field of CM,}\\ 
r & \textrm{otherwise.}
\end{array} 
\right. 
\end{displaymath}
\noindent
When $y^2=f(x)$ is a hyperelliptic curve of genus $2$, we get

\begin{displaymath}
 \lim_{N \to \infty}\frac{1}{\pi(N)}\sum_{p<N}-A_{p}(\mathcal{X}^{f}_{D}) =r\cdot \left(
\begin{tabular}{c} Second moment of normalized \\ trace of Frobenius $f_{p}/\sqrt{p}$
\end{tabular} \right),
\end{displaymath}
\noindent
which is shown in the third column of Table \ref{my-label} for each Sato-Tate type.
~\\

On the other hand, for a number field $k$, we know from the Proposition \ref{prop2.3}
$$J(X^{f}_{D})(k(T))\sim \Hom_{k}(J_{D},J_{f}) \sim \Hom_{k}(J_{f}^r,J_{f}) \sim \Hom_{k}(J_{f},J_{f})^{r}.$$
Thus we can conclude
$$\rank J(X^{f}_{D})(k(T))=r\cdot \rank \End_{k}(J_{f}), $$
which is equal to the second moment of the normalized trace of Frobenius for both cases with $k=\Q$, i.e., for hyperelliptic $y^2=f(x)$ case, it's $r$ times the second column of Table \ref{my-label}. Therefore the following equality holds
$$\lim_{N \to \infty}\frac{1}{\pi(N)}\sum_{p<N}-A_{p}(\mathcal{X}^{f}_{D})=\rank J(X^{f}_{D})(\Q(T))$$
and by Lemma \ref{lem1}, Nagao's conjecture is true
$$\lim_{N \to \infty}\frac{1}{N}\sum_{p<N}-A_{p}(\mathcal{X}_{D,t}^{f})\log p=\rank J(X^{f}_{D})(\Q(T)).$$

\end{proof}

\begin{Remark}
The Sato-Tate conjecture for abelian surfaces (Conjecture \ref{refined}) over number fields which are $\overline{\Q}$-isogenous to the square of an elliptic curve with complex multiplication is proven by F. Fit\'e and A. Sutherland \cite{FS}, and C. Johannson \cite{JHN}. The first $18$ lines in Table \ref{my-label} are the corresponding cases. Moreover, the conjecture is also true for the next $5$ lines, i.e., $E_{1}$, $E_{2}$, $E_{3}$, $E_{4}$, $E_{6}$, and also for $F_{ac}$ and $F_{a,b}$ by \cite{JHN}.   
\end{Remark}

\begin{Remark}
The restriction on the genus of $y^{2}=f(x)$ in Theorem \ref{thm20} and Theorem \ref{thm21} is essentially because the Sato-Tate conjecture for genus greater than $2$ has not been studied yet. 
\end{Remark}

\begin{Theorem}
Let $E: y^2=f(x)$ be an elliptic curve over $\Q$. We can consider a surface $\mathcal{X}^{f}_{D}\rightarrow \PPP^{1}$ which is defined using a hyperelliptic curve $y^2=D(T)\in\Q[T]$
$$\mathcal{X}^{f}_{D}: D(T)y^{2}=f(x).$$
Denote by $J_{f}$ and $J_{D}$ the Jacobian defined with the curves $y^2=f(x)$ and $y^2=D(T)$ respectively. Also, denote the generic fiber of $\mathcal{X}^{f}_{D}$ by $X^{f}_{D}/\Q(T)$, and we assume that $X^{f}_{D}/\Q(T)$ has trivial $\Q(T)/\Q$-trace.  
Assume that the Jacobian of $D$ has a ($\Q$-isogenous) factorization $J_{D}\sim E^r\times E_{1} \times E_{2} \times \cdots \times E_{s} $,
where $E_{i}$ are elliptic curves without complex multiplication which are not isogenous to $E$ for $i=1,2,\cdots ,s$. Then Nagao's conjecture is true for $\mathcal{X}^{f}_{D}$.
\end{Theorem}

\begin{proof}
From the factorization $J_{D}\sim E^r\times E_{1} \times E_{2} \times \cdots \times E_{s} $, we can deduce the factorization of the $L$-polynomial of $J_{D}$
$$L_{p}(J_{D},T)=L_{p}(E,T)^{r}\cdot L_{p}(E_{1},T)\cdot L_{p}(E_{2},T)\cdots L_{p}(E_{s},T).$$
Thus we get a relation of the trace of Frobenius $a_{p}(E)$, $a_{p}(E_{i})$ of each $E$ and $E_{i}$ for $i=1,\cdots , s$ by comparing the linear coefficient of the $L$-polynomials.
$$a_{p}(J_{D})=r\cdot a_{p}(E)+a_{p}(E_{1})+a_{p}(E_{2})+\cdots + a_{p}(E_{s})$$
and
$$\sum_{t=0}^{p-1}\left(\frac{D(t)}{p}\right)=-r\cdot a_{p}(E)-a_{p}(E_{1})-\cdots -a_{p}(E_{s}).$$
On the other hand, we can write the trace of Frobenius of each fiber of $\X^{f}_{D,t}$ as

$$a_{p}(\X_{D,t}^{f})=\left(\frac{D(t)}{p}\right)\cdot a_{p}(E),$$
so 
\begin{align}
\sum_{t=0}^{p-1}a_{p}(\X^{f}_{D,t})&=\sum_{t=0}^{p-1}\left(\frac{D(t)}{p}\right)\cdot a_{p}(E)\\
&=-r\cdot a_{p}(E)^{2}-a_{p}(E)\cdot a_{p}(E_{1})-a_{p}(E)\cdot a_{p}(E_{2})-\cdots - a_{p}(E)\cdot a_{p}(E_{s}).
\end{align}
We know two nonisogenous elliptic curves without complex multiplication have the product Sato-Tate distribution \cite[Proposition 2.1]{MP} which leads to
\begin{align*}
\label{eq21}
\lim_{N \to\infty}\frac{1}{\pi(N)}\sum_{p<N} & -A_{p}(\X^{D}_{f,t})\\
&= \lim_{N\to \infty}\frac{1}{\pi(N)}\sum_{p<N} \left(r\cdot \frac{a_{p}(E)^{2}}{p}+\frac{a_{p}(E)}{\sqrt{p}}\cdot \frac{a_{p}(E_{1})}{\sqrt{p}} +\cdots+\frac{a_{p}(E)}{\sqrt{p}}\cdot \frac{a_{p}(E_{s})}{\sqrt{p}}\right)\\
&=r\cdot \int^{\pi}_{0}4\cos^{2}\theta\cdot\frac{2}{\pi}\sin^{2}\theta d\theta+\int^{\pi}_{0}4\cos\theta\cos\theta_{1}\cdot\frac{4}{\pi^{2}}\sin^{2}\theta\sin^{2}\theta_{1}\cdot d\theta d\theta_{1} \\ &~~~~~~~~~~~~~~~~~~~~~~~~~~~~~~~~~~~~~~~~~~~~~ +
\cdots+\int^{\pi}_{0}4\cos\theta\cos\theta_{s}\cdot\frac{4}{\pi^{2}}\sin^{2}\theta\sin^{2}\theta_{s}\cdot d\theta d\theta_{s}\\
&=r+0+\cdots+0\\
&=r.
\end{align*}
Further, since the $E_{i}$ are nonisogenous to $E$ for all $i=1,\cdots s$, 
\begin{align*}
J(X^{f}_{D})(\Q(T))&\sim \Hom(J_{D},E)\\
& \sim \Hom(E^{r}\times E_{1}\times\cdots\times E_{s},E)\\ 
& \sim \Hom(E,E)^{r}\times \Hom(E_{1},E)\times\cdots\times \Hom(E_{s},E)\\
& \sim \Hom(E,E)^{r}.
\end{align*}
Thus we can conclude
$$\rank J(X^{f}_{D})(\Q(T))=r\cdot\rank\End(E)=r,$$
which tells us
$$\lim_{N \to\infty}\frac{1}{\pi(N)}\sum_{p<N}-A_{p}(\X^{D}_{f,t})=\rank J(X^{f}_{D})(\Q(t)).$$
Therefore, Nagao's conjecture is true by Lemma \ref{lem1}, i.e.,
$$\lim_{N \to \infty}\frac{1}{N}\sum_{p<N}-A_{p}(\mathcal{X}_{D,t}^{f})\log p=\rank J(X^{f}_{D})(\Q(T)).$$
\end{proof}

\section*{Acknowledgement}
The author would like to thank her advisor Joseph H. Silverman for
his continuously helpful advice. We would also like to thank Kiran Kedlaya, Ram Murty, Koh-ichi Nagao, and Andrew Sutherland for their helpful advice.

\addcontentsline{toc}{chapter}{\numberline{}Bibliography}

\nocite{CHT}
\nocite{CON}
\nocite{FKRS}
\nocite{FS}
\nocite{HP}
\nocite{HST}
\nocite{IR}
\nocite{KS}
\nocite{JHN}
\nocite{LANG}
\nocite{LN}
\nocite{MP}
\nocite{NAG}
\nocite{NER}
\nocite{PET}
\nocite{RS}
\nocite{RuSil}
\nocite{SER}
\nocite{SER1}
\nocite{SIL1}
\nocite{SIL2}
\nocite{TATE1}
\nocite{TAY}

\end{document}